\newtheorem{thm}{Theorem}
\newtheorem{prop}[thm]{Proposition}
\newtheorem{cor}[thm]{Corollary}
\newtheorem{lem}[thm]{Lemma}
\theoremstyle{plain}
\newcommand{\Z}{\mathbb{Z}}
\newcommand{\Q}{\mathbb{Q}}
\newcommand{\C}{\mathbb{C}}
\newcommand{\dotp}[2]{\left\langle #1,#2 \right\rangle}
\title{On the Ranks of the 2-Selmer Groups of Twists of a Given Elliptic Curve}
\author{Daniel M. Kane \\
Stanford University Department of Mathematics\\
Building 380, Sloan Hall\\
Stanford, CA 94305\\
\texttt{dankane@math.stanford.edu}
}
\begin{document}

\maketitle
\begin{abstract}
In \cite{kn:SD}, Swinnerton-Dyer considered the proportion of twists of an elliptic curve with full 2-torsion that have 2-Selmer group of a particular dimension. Swinnerton-Dyer obtained asymptotic results on the number of such twists using an unusual notion of asymptotic density. We build on this work to obtain similar results on the density of twists with particular rank of 2-Selmer group using the natural notion of density.
\end{abstract}
\section{Introduction}

Let $c_1,c_2,c_3$ be distinct rational numbers.  Let $E$ be the elliptic curve defined by the equation
$$
y^2 = (x-c_1)(x-c_2)(x-c_3).
$$
We make the additional technical assumption that none of the $(c_i-c_j)(c_i-c_k)$ are squares.  This is equivalent to saying that $E$ is an elliptic curve over $\Q$ with complete 2-torsion and no cyclic subgroup of order 4 defined over $\Q$.  For $b$ a square-free number, let $E_b$ be the twist defined by the equation
$$
y^2 = (x-bc_1)(x-bc_2)(x-bc_3).
$$
Let $S$ be a finite set of places of $\Q$ including $2,\infty$ and all of the places at which $E$ has bad reduction.  Let $D$ be a positive integer divisible by 8 and by the primes in $S$.  Let $S_2(E_b)$ denote the 2-Selmer group of the curve $E_b$.  We will be interested in how the rank varies with $b$ and in particular in the asymptotic density of $b$'s so that $S_2(E_b)$ has a given rank.

The parity of $\dim(S_2(E_b))$ depends only on the class of $b$ as an element of $\prod_{\nu\in S} \Q_\nu^*/(\Q_\nu^*)^2$.  We claim that for exactly half of these values this dimension is odd and exactly half of the time it is even. In particular, we make the following claim which will be proved later.
\begin{lem}\label{parityLem}
There exists a set $S$ consisting of exactly half of the classes $c$ in $(\Z/D)^*/((\Z/D)^*)^2$, so that for any positive integer $b$ relatively prime to $D$ we have that $\dim(S_2(E_b))$ is even if and only if $b$ represents a class in $S$.
\end{lem}
We put off the proof of this statement until Section \ref{MomentSec}.

Let $b=p_1p_2\ldots p_n$ where $p_i$ are distinct primes relatively prime to $D$.  In \cite{kn:SD} the rank of $S_2(E_b)$ is shown to depend only on the images of the $p_i$ in $(\Z/D)^*/((\Z/D)^*)^2$ and upon which $p_i$ are quadratic residues modulo which $p_j$.  There are $2^{n|S|+\binom{n}{2}}$ possible sets of values for these.  Let $\pi_d(n)$ be the fraction of this set of possibilities that cause $S_2(E_b)$ to have rank exactly $d$.  Then the main theorem of \cite{kn:SD} together with Lemma \ref{parityLem} implies that:
\begin{thm}\label{SDThm}
$$
\lim_{n\rightarrow\infty} \pi_d(n) = \alpha_d.
$$
where
$\alpha_0=\alpha_1=0$ and $\alpha_{n+2} = \frac{2^n}{\prod_{j=1}^n(2^j-1)\prod_{j=0}^\infty(1+2^{-j})}.$
\end{thm}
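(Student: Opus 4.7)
The plan is to read off the stated convergence directly from the main theorem of \cite{kn:SD} together with Lemma \ref{parityLem}. Swinnerton-Dyer's result is conditional in nature: after fixing the parity of $\dim S_2(E_b)$---equivalently, after fixing which of the two halves of $(\Z/D)^*/((\Z/D)^*)^2$ the class of $b$ lies in---it computes the limiting density of those configurations that yield $\dim S_2(E_b)=d$. Because $\alpha_d$ is supported on a single parity depending on whether $d$ is even or odd, these conditional limits must equal $2\alpha_d$ on the parity of $d$ and $0$ on the other parity. My first step would be to extract this conditional statement precisely from \cite{kn:SD}.

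To pass from the conditional to the unconditional density $\pi_d(n)$, I would invoke Lemma \ref{parityLem}. The parity of $\dim S_2(E_b)$ depends only on the residues of the $p_i$ modulo $D$, and not on the $\binom{n}{2}$ Legendre symbols $(p_i/p_j)$; so exactly half of the $2^{n|S|+\binom{n}{2}}$ possible configurations fall into each parity class. Averaging the two conditional limits with equal weight $\tfrac12$ then yields $\alpha_d$. As a consistency check, applying Euler's identity
$$
\prod_{j=0}^\infty(1+xq^j) \; = \; \sum_{n=0}^\infty \frac{q^{n(n-1)/2}x^n}{\prod_{j=1}^n(1-q^j)}
$$
at $q=\tfrac12$ with $x=\pm 1$ gives both $\sum_d \alpha_d = 1$ and $\sum_{d\text{ even}}\alpha_d = \sum_{d\text{ odd}}\alpha_d = \tfrac12$, so this averaging does produce a genuine probability distribution.

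The main obstacle is Swinnerton-Dyer's conditional theorem itself, which is already carried out in \cite{kn:SD}; its proof realizes $S_2(E_b)$ as the kernel of an explicit symmetric $\mathbb{F}_2$-matrix whose off-diagonal entries are the Legendre symbols $(p_i/p_j)$ and whose remaining entries encode local data at the places in $S$, and then computes the limiting corank distribution of this random matrix (showing that, once the parity constraint is imposed, the entries behave asymptotically like those of a uniformly random symmetric matrix of the appropriate type). The only genuinely new ingredient required for the present paper is therefore Lemma \ref{parityLem}, whose proof is deferred to Section \ref{MomentSec}.
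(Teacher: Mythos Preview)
Your proposal is correct and matches the paper's own treatment: the paper does not give an independent proof of this theorem but simply states that it follows from the main theorem of \cite{kn:SD} together with Lemma~\ref{parityLem}, and then records precisely the conditional form of Swinnerton-Dyer's result that you describe (for each fixed class of $b$ the conditional $\pi_d^c(n)$ tends to $2\alpha_d$ on one parity and to $0$ on the other). Your averaging argument is exactly the intended bridge; the only step left implicit is that each class $c\in(\Z/D)^*/((\Z/D)^*)^2$ is hit by the same number of residue tuples $(p_1,\ldots,p_n)$, which is immediate since the product map $((\Z/D)^*/((\Z/D)^*)^2)^n\to(\Z/D)^*/((\Z/D)^*)^2$ is a surjective homomorphism.
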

The actual theorem proved in \cite{kn:SD} says that if, in addition, the class of $b$ in $\prod_{\nu\in S} \Q_\nu^*/(\Q_\nu^*)^2$ is fixed, then the analogous $\pi_d(n)$ either converge to $2\alpha_d$ for $d$ even and $0$ for $d$ odd, or to $2\alpha_d$ for $d$ odd and $0$ for $d$ even.

This tells us information about the asymptotic density of twists of $E$ whose 2-Selmer group has a particular rank.  Unfortunately, this asymptotic density is taken in a somewhat awkward way by letting the number of primes dividing $b$ go to infinity.  In this paper, we prove the following more natural version of Theorem \ref{SDThm}:
\begin{thm}\label{mainThmSel}
Let $E$ be an elliptic curve over $\Q$ with full 2-torsion defined over $\Q$ so that in addition for $E$ we have that
$$
\lim_{n\rightarrow\infty} \pi_d(n) = \alpha_d.
$$
With $\alpha_d$ as given in Theorem \ref{SDThm}.  Then
$$
\lim_{N\rightarrow \infty} \frac{\#\{b\leq N : b \ \textrm{square-free}, \ (b,D)=1 \ \textrm{and} \ \dim(S_2(E_b))=d\}}{\#\{b\leq N : b \ \textrm{square-free and} \ (b,D)=1\}} = \alpha_d.
$$
\end{thm}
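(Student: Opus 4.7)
The plan is to reduce Theorem \ref{mainThmSel} to an equidistribution statement that transports the combinatorial limit $\pi_d(n)\to\alpha_d$ over to the natural density on squarefree $b\leq N$. Recall that for $b=p_1\cdots p_n$ with the $p_i$ distinct primes coprime to $D$, the rank $\dim(S_2(E_b))$ is, by \cite{kn:SD}, a function $R$ of the datum
$$
\sigma(b)=\bigl((\overline{p_1},\ldots,\overline{p_n});\, ((p_i/p_j))_{i<j}\bigr)
$$
living in a finite set $\Sigma_n$ of size $2^{|S|n+\binom{n}{2}}$. The hypothesis says that $R$ takes the value $d$ on a fraction $\alpha_d+o(1)$ of $\Sigma_n$, so to conclude it suffices to show that, as $b$ ranges over squarefree $b\leq N$ coprime to $D$, the pushforward measure on $\Sigma_n$ conditioned on $\omega(b)=n$ approaches uniform, with control sufficient to pass to the limit simultaneously in $N$ and $n$.

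To carry this out, I would verify an equidistribution bound of the shape
$$
\Bigl|\sum_{\substack{b\leq N,\ \omega(b)=n\\ \text{sq-free},\,(b,D)=1}}\chi(\sigma(b))\Bigr|\ \ll_n\ N(\log N)^{-A}
$$
for every nontrivial character $\chi$ of $\Sigma_n$ and some $A>0$, with the dependence on $n$ mild enough that the total contribution of nontrivial $\chi$ is negligible. Such a character factors as a product of Dirichlet characters modulo $D$ evaluated at the prime factors $p_i$ times a product of Legendre symbols $\prod_{(i,j)\in E}(p_i/p_j)$ for some nonempty $E\subseteq\binom{[n]}{2}$. The Dirichlet factor is handled by Siegel--Walfisz or Bombieri--Vinogradov applied to a sifted sum over primes. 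For the Legendre factor one uses quadratic reciprocity to convert each $(p_i/p_j)$ into a Jacobi symbol with modulus supported among the $p_k$, and then extracts cancellation by an iterated Cauchy--Schwarz / large-sieve argument of the type used by Heath-Brown and others in the analogous problem for the congruent number curve; this is presumably the content (or a relative of it) of the moment-method machinery of Section \ref{MomentSec} that already underlies Lemma \ref{parityLem}.

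Given such equidistribution, I would split $\sum_{b\leq N}$ according to $n=\omega(b)$. By Erd\H{o}s--Kac, a proportion $1-o_T(1)$ of $b\leq N$ satisfies $|\omega(b)-\log\log N|\leq T\sqrt{\log\log N}$; on this central window $n\to\infty$ as $N\to\infty$, so the hypothesis $\pi_d(n)\to\alpha_d$ applies. Combining the equidistribution with $\pi_d(n)\to\alpha_d$ on the central window, sending $N\to\infty$ and then $T\to\infty$, produces the claimed density.

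The main obstacle is the uniformity in $n$ of the character-sum estimate above. Because $|\Sigma_n|=2^{|S|n+\binom{n}{2}}$ grows super-exponentially while the typical $n$ is only $\log\log N$, one cannot afford a trivial union bound over $\chi$; instead one must exploit structure (concentration of the rank $R(\sigma)$ on the uniform measure, or averaging of character sums over large families) so that the net error from nontrivial $\chi$ stays $o\bigl(N/\log\log N\bigr)$ in each $\omega$-shell. Controlling this interaction between the combinatorial size of $\Sigma_n$ and the analytic savings from quadratic-symbol cancellation is the technical heart of the argument.
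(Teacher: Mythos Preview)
Your outline diverges from the paper at the decisive step, and the gap you yourself flag is real. The paper does \emph{not} attempt to equidistribute $\sigma(b)$ on $\Sigma_n$. Instead it computes, for each fixed $k\geq 0$, the average of $|S_2(E_b)|^k$ (and of $(-1)^{\dim S_2(E_b)}$) over $b$ in an $\omega$-shell near $\log\log N$, shows these equal $\sum_d \alpha_d 2^{kd}$ (Proposition~\ref{expectedValueProposition}), and then recovers the individual densities from the moments by a compactness/complex-analysis argument: any subsequential limit $(\beta_d)$ of the empirical rank distribution has generating function $G(x)=\sum\beta_d x^d$ agreeing with $F(x)=\sum\alpha_d x^d$ at $x=-1$ and $x=2^k$, and an order-$<1$ entire-function argument with Jensen's formula forces $F=G$ (Lemmas~\ref{limitLem}, \ref{taylorLem}).

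The moment route is not a cosmetic choice; it is what makes the character count tractable. A union bound over the $2^{\Theta(n^2)}$ characters of $\Sigma_n$ against the bound $O_{c}(Nc^m)$ of Proposition~\ref{characterBoundProp} (where $m\leq n$) loses by $2^{\Theta(n^2)}c^n$, which diverges for every fixed $c$; so full equidistribution on $\Sigma_n$ is out of reach by these tools. Expanding $|S_2(E_b)|^k=|U\cap W|^k=2^{-kM}\sum_{u,w}(-1)^{u\cdot w}$ instead produces only the very special characters attached to pairs $(u,w)\in (U')^k\times (W_c')^k$, and the Lagrangian/symplectic structure of $U,W$ in $V$ forces a strong count: pairs with exactly $\ell$ ``active'' indices (those for which a genuine $\bigl(\tfrac{p_i}{p_j}\bigr)$ survives) number $O_k\bigl(\binom{n}{\ell}(2^k-1)^{n-\ell}2^{2k\ell}\bigr)$, because the non-active $(u_i,w_i)$ must lie in a translate of a Lagrangian of a $2k$-dimensional symplectic space and cannot fill it. Against the normalization $2^{-kn}$ and the choice $c=2^{-2k-1}$ in Proposition~\ref{characterBoundProp}, summing over $\ell\geq 1$ gives $O_{E,k}\bigl(N(1-2^{-k-1})^n\bigr)=O_{E,k}\bigl(N(\log N)^{-2^{-k-2}}\bigr)$. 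That Lagrangian counting is the structure you say one must ``exploit,'' but it is invisible at the level of arbitrary characters of $\Sigma_n$; it comes from the specific shape of $|U\cap W|^k$. Your proposal does not supply a substitute for it, and I do not see a way to push the direct equidistribution program through without something of equal strength. (A minor correction: Lemma~\ref{parityLem} is not a byproduct of the moment machinery; it is proved separately from a parity formula of Mazur--Rubin and Kramer.)
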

Applying this to twists of $E$ by divisors of $D$ and noting that twists by squares do not affect the Selmer rank we have that
\begin{cor}
$$
\lim_{N\rightarrow\infty} \frac{\#\{b\leq N: \dim(S_2(E_b))=d\}}{N} = \alpha_d.
$$
\end{cor}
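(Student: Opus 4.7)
The plan is to reduce the count over arbitrary positive $b \le N$ to counts over squarefree integers coprime to $D$, where Theorem \ref{mainThmSel} can be invoked (applied to appropriate twists of $E$). The reduction uses two decompositions together with the fact that twisting by a rational square yields an isomorphic curve over $\Q$. First, I would write $b = m^2 b'$ uniquely with $b'$ squarefree; then $\dim(S_2(E_b)) = \dim(S_2(E_{b'}))$, since $E_{m^2 b'} \cong E_{b'}$ over $\Q$ via $(x,y) \mapsto (m^2 x, m^3 y)$. Second, I would factor $b' = e b''$, where $e$ is the squarefree divisor of $D$ consisting of the primes of $b'$ that divide $D$ and $b''$ is squarefree with $(b'', D) = 1$. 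Directly from the defining equations, $E_{b'} = E_{eb''} = (E_e)_{b''}$.

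For each squarefree divisor $e$ of $D$, the curve $E_e$ again has full 2-torsion, and the introduction's nondegeneracy condition is preserved because $(ec_i - ec_j)(ec_i - ec_k) = e^2 (c_i - c_j)(c_i - c_k)$ is a nonsquare. Hence by Theorem \ref{SDThm} the curve $E_e$ satisfies the hypothesis of Theorem \ref{mainThmSel}. Applying that theorem to each such $E_e$ and summing over the finitely many choices of $e$ (and using the bijection $b' \leftrightarrow (e, b'')$) yields
$$\#\{b' \le M : b' \text{ squarefree},\ \dim(S_2(E_{b'})) = d\} \sim \alpha_d \cdot \#\{b' \le M : b' \text{ squarefree}\}$$
as $M \to \infty$. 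Summing over $m \ge 1$ and using the combinatorial identity $\sum_{m \ge 1} \#\{b' \le N/m^2 : b' \text{ squarefree}\} = N$ would then, formally, deliver the desired conclusion $\#\{b \le N : \dim(S_2(E_b)) = d\} \sim \alpha_d N$.

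The main obstacle is making this last step rigorous: the $o(1)$ error in the intermediate asymptotic must be controlled uniformly enough to survive summation over infinitely many $m$. I would handle this by a standard truncation argument: given $\epsilon > 0$, choose $M_0$ so that the intermediate asymptotic holds with relative error at most $\epsilon$ for all $M \ge M_0$. For $m \le \sqrt{N/M_0}$ the errors aggregate to at most $\epsilon \sum_{m \ge 1} N/m^2 = O(\epsilon N)$; for $m > \sqrt{N/M_0}$ the trivial bound $\#\{b' \le N/m^2 \text{ squarefree}\} \le N/m^2$ gives a tail of $\sum_{m > \sqrt{N/M_0}} N/m^2 = O(\sqrt{N M_0}) = o(N)$. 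Letting $\epsilon \to 0$ completes the proof.
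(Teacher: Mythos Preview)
Your argument is correct and follows exactly the approach the paper indicates: the paper's own proof is the single sentence ``Applying this to twists of $E$ by divisors of $D$ and noting that twists by squares do not affect the Selmer rank,'' and you have simply spelled out those two reductions (the $b=m^2b'$ and $b'=eb''$ decompositions) together with the routine truncation needed to pass the limit through the sum over $m$. There is nothing to add.
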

and
\begin{cor}
$$
\lim_{N\rightarrow\infty} \frac{\#\{-N\leq b\leq N: \dim(S_2(E_b))=d\}}{2N} = \alpha_d.
$$
\end{cor}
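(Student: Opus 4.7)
The plan is to deduce this corollary from the preceding one by splitting the range $[-N,N]$ into positive and negative halves and identifying the negative twists of $E$ with positive twists of a companion curve. The value $b=0$ corresponds to a degenerate (singular) cubic and contributes only $O(1)$, which is absorbed into the error after dividing by $2N$, so I would handle $1\le b\le N$ and $-N\le b\le -1$ separately.

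For the positive half, the preceding corollary applied to $E$ gives directly
$$
\#\{1\le b\le N:\dim(S_2(E_b))=d\}\sim\alpha_d N.
$$
For the negative half, I would use the elementary identity $E_{-b}\cong (E_{-1})_{b}$ (twisting by $-b$ is the composition of the twists by $-1$ and by $b$), which yields
$$
\#\{-N\le b\le -1:\dim(S_2(E_b))=d\}=\#\{1\le b'\le N:\dim(S_2((E_{-1})_{b'}))=d\},
$$
and I would then invoke the preceding corollary with $E$ replaced by the twisted curve $E':=E_{-1}$, cut out by $y^2=(x+c_1)(x+c_2)(x+c_3)$.

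To justify that application I must check that $E'$ still satisfies the standing hypotheses of the paper. Its roots $-c_1,-c_2,-c_3$ are distinct rationals, and the non-square condition transfers because $(-c_i+c_j)(-c_i+c_k)=(c_i-c_j)(c_i-c_k)$; Theorem \ref{SDThm} then supplies the hypothesis $\pi_d(n)\to\alpha_d$ for $E'$, so that the preceding corollary applies to $E'$ and yields $\alpha_d N$ for the negative half as well. Adding the two estimates and dividing by $2N$ produces the limit $\alpha_d$. No step is a serious obstacle; the only place where any care is needed is the verification that the structural hypotheses on $E$ pass to its twist by $-1$, and this is a one-line computation.
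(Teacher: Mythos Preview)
Your proposal is correct and is essentially the paper's own argument. The paper derives both corollaries in a single sentence by applying Theorem~\ref{mainThmSel} to the twists of $E$ by divisors of $D$ (together with invariance of the Selmer rank under twists by squares); your reduction via $E_{-b}\cong(E_{-1})_b$ and the verification that $E_{-1}$ satisfies the standing hypotheses is exactly the $-1$ case of that remark, routed through the first corollary rather than directly through the main theorem.
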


Our technique is fairly straightforward.  Our goal will be to prove that the average moments of the size of the Selmer groups will be as expected.  As it turns out, this along with Lemma \ref{parityLem} will be enough to determine the probability of seeing a given rank.  In order to analyze the Selmer groups we follow the method described in \cite{kn:SD}.  Here the 2-Selmer group of $E_b$ can be expressed as the intersection of two Lagrangian subspaces, $U$ and $W$, of a particular symplectic space, $V$, over $\mathbb{F}_2$.  Although $U,V$ and $W$ all depend on $b$, once the number of primes dividing $b$ has been fixed along with its congruence class modulo $D$, these spaces can all be written conveniently in terms of the primes, $p_i$, dividing $b$, which we think of as formal variables.  Using the formula $|U\cap W| = \frac{1}{\sqrt{|V|}}\sum_{u\in U,w\in W} (-1)^{u\cdot w}$, we reduce our problem to bounding the size of the ``characters'' $(-1)^{u\cdot w}$ when averaged over $b$.  These ``characters'' turn out to be products of Dirichlet characters of the $p_i$ and Legendre symbols of pairs of the $p_i$.  The bulk of our analytic work is in proving these bounds.  These bounds will allow us to discount the contribution from most of the terms in our sum (in particular the ones in which Legendre symbols show up in a non-trivial way), and allow us to show that the average of the remaining terms is roughly what should be expected from Swinnerton-Dyer's result.

We should point out the connections between our work and that of Heath-Brown in \cite{kn:HB} where he proves our main result for the particular curve
$$
y^2 = x^3-x.
$$
We employ techniques similar to those of \cite{kn:HB}, but the algebra behind them is organized significantly differently.  Heath-Brown's overall strategy is again to compute the average sizes of moments of $|S_2(E_b)|$ and use these to get at the ranks.  He computes $|S_2(E_b)|$ using a different formula than ours.  Essentially what he does is use some tricks specific to his curve to deal with the conditions relating to primes dividing $D$, and instead of considering each prime individually, he groups them based on how they occur in $u$ and $w$.  He lets $D_i$ be the product of all primes dividing $b$ that relate in a particular way (indexed by $i$).  He then gets a formula for $|S_2(E_b)|$ that's a sum over ways of writing $b$ as a product, $b=\prod D_i$, of some term again involving characters of the $D_i$ and Legendre symbols.  Using techniques similar to ours he shows that terms in this sum where the Legendre symbols have a non-negligible contribution (are not all trivial due to one of the $D_i$ being 1) can be ignored.  He then uses some algebra to show that the average of the remaining terms is the desired value.  This step differs from our technique where we merely make use of Swinnerton-Dyer's result to compute our average.  Essentially we show that the algebra and the analysis for this problem can be done separately and use \cite{kn:SD} to take care of the algebra.  Finally, Heath-Brown uses some techniques from linear algebra to show that the moment bounds imply the correct densities of ranks, while we use techniques from complex analysis.

We also note the work of Yu in \cite{Yu}. In this paper, Yu shows that for a wide family of curves of full $2$-torsion that the average size of the 2-Selmer group of a twist is equal to 12. This work uses techniques along the lines of Heath-Brown's, though has some added complication in order to deal with the greater generality.

One advantage of our technique over these others is that we can, to some degree, separate the algebra involved in analyzing the sizes of these Selmer groups from the analysis. When considering the distribution of ranks of Selmer groups of twists of an elliptic curve, there are two types of density estimates that have come up in the literature. The first is to use the natural notion of density over some obvious ordering of twist parameter. The other is to use some notion similar to that of Swinnerton-Dyer, which can be thought of as letting the number of primes dividing the twist parameter go to infinity. Although one is usually interested in natural densities, the Swinnerton-Dyer type results are often easier to prove as they tend to be essentially algebraic in nature, while results about natural density will generally require some tricky analytic work. The techniques of this paper show how asymptotics of the Swinnerton-Dyer type can be upgraded to results for natural density. Although we have only managed to carry out this procedure for the family of curves used in Theorem \ref{SDThm}, there is hope that this procedure might have greater applicability. For example, if someone were to obtain a Swinnerton-Dyer type result for twists of an elliptic curve with full 2-torsion over $\Q$ that \emph{has} a rational 4-isogeny, it is almost certain that the techniques from this paper would allow one to obtain a result for the same curve using the natural density. Additionally, in \cite{nottor}, Klagsbrun, Mazur and Rubin consider the ranks of twists of an elliptic curve with $\textrm{Gal}(K(E[2])/K)\simeq S_3$, and obtain Swinnerton-Dyer type density results. It is possible that ideas in this paper may be adapted to improve these results to work with a more natural notion of density as well. Unfortunately, working in this extended context will likely complicate the analytic aspects of the argument considerably. For example, while we make important use of the fact that the rank of $S_2(E_b)$ depends only on congruence classes of primes dividing $b$ and Legendre symbols between them, it is shown in \cite{spin} that for curves with cyclic cubic field of $2$-torsion that the Selmer rank can depend on more complicated algebraic objects (such as what they term the spin of a prime).

In Section \ref{prelimSecSel}, we introduce some basic concepts that will be used throughout.  In Section \ref{charBoundSec}, we will prove the necessary character bounds.  We use these bounds in Section \ref{MomentSec} to establish the average moments of the size of the Selmer groups.  Finally, in Section \ref{toRankSec}, we explain how these results can be used to prove our main Theorem.

\section{Preliminaries}\label{prelimSecSel}

\subsection{Asymptotic Notation}

Throughout the rest of this paper we will make extensive use of $O$, and similar asymptotic notation.  In our notation, $O(X)$ will denote a quantity that is at most $H\cdot X$ for some \emph{absolute} constant $H$.  If we need asymptotic notation that depends on some parameters, we will use $O_{a,b,c}(X)$ to denote a quantity that is at most $H(a,b,c)\cdot X$, where $H$ is some function depending only on $a,b$ and $c$.

\subsection{Number of Prime Divisors}

In order to make use of Swinnerton-Dyer's result, we will need to consider twists of $E$ by integers $b\leq N$ with a specific number of prime divisors.  For an integer $m$, we let $\omega(m)$ be the number of prime divisors of $m$.  In our analysis, we will need to have estimates on the number of of such $b$ with a particular number of prime divisors.  We define
$$
\Pi_n(N) = \#\{\textrm{primes } p\leq N \textrm{ so that } \omega(p)=n\}.
$$
In order to deal with this we use Lemma A of \cite{RamPrimes} which states:
\begin{lem}\label{primeDivsLem}
There exist absolute constants $C$ and $K$ so that for any $\nu$ and $x$
$$
\Pi_{\nu+1}(x) \leq \frac{Kx}{\log(x)} \frac{(\log\log x + C)^\nu}{\nu!}.
$$
\end{lem}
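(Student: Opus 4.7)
The plan is a Hardy--Ramanujan style induction on $\nu$. (I read $\Pi_n(N)$ as the number of integers $m \leq N$ with $\omega(m) = n$, which is the only interpretation consistent with the stated bound, since primes themselves all have $\omega = 1$.) The base case $\nu = 0$ is Chebyshev's bound: the number of prime powers up to $x$ is $\pi(x) + O(\sqrt{x}) \ll x/\log x$, matching the right-hand side when $\nu = 0$.

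For the inductive step, I would double-count by recording, for each $m \leq x$ with $\omega(m) = \nu+1$, each of its $\nu+1$ distinct prime divisors. Writing such an $m$ uniquely as $p^a n$ with $a \geq 1$, $(p,n) = 1$, and $\omega(n) = \nu$ yields the fundamental recursion
$$
(\nu+1)\,\Pi_{\nu+1}(x) \;\leq\; \sum_{p}\sum_{a \geq 1}\Pi_\nu(x/p^a).
$$
The terms with $a \geq 2$ contribute at most $O(x/\log x)$ uniformly in $\nu$, because $\sum_{p}\sum_{a\geq 2} 1/p^a$ converges and the inductive hypothesis loses only logarithmic factors on these terms. The main term is $\sum_{p \leq x} \Pi_\nu(x/p)$, which by the inductive bound is at most
$$
\frac{K}{(\nu-1)!}\sum_{p \leq x}\frac{x/p}{\log(x/p)}\bigl(\log\log(x/p) + C\bigr)^{\nu-1}.
$$
A partial summation using the Mertens estimate $\sum_{p \leq y} 1/p \leq \log\log y + C_0$ shows this is at most $\frac{Kx}{\nu!\,\log x}\bigl(\log\log x + C\bigr)^\nu$ provided $C \geq C_0$; the mass is concentrated on $p \leq \sqrt{x}$, where $\log(x/p) \geq \tfrac{1}{2}\log x$ and $\log\log(x/p) \leq \log\log x$, while the range $\sqrt{x} < p \leq x$ contributes at most $\Pi_\nu(\sqrt{x}) \cdot \pi(x)/x$-type savings that are negligible. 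Dividing by $\nu+1$ produces exactly the shape of bound claimed, with slack that accommodates the lower-order errors.

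The main obstacle will be keeping the absolute constants $K$ and $C$ uniform across all $\nu$ simultaneously. One must choose $C$ large enough that the Mertens inequality holds for \emph{all} $y \geq 2$ without exception, and then verify that the arithmetic of the factorials $(\nu-1)! \to \nu!$ and the extra factor $1/(\nu+1)$ from the recursion leave just enough room to absorb the $a \geq 2$ contributions and the large-$p$ tail into the absolute constant $K$. For sufficiently large $\nu$ (say $\nu \gtrsim \log\log x$) the bound becomes trivial, so only moderate ranges of $\nu$ actually need careful treatment.
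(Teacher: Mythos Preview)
The paper does not actually prove this lemma: it simply quotes it as ``Lemma A of \cite{RamPrimes}'' (Hardy--Ramanujan, 1917) and moves on. Your proposal is precisely the classical Hardy--Ramanujan inductive argument from that cited paper, so in effect you are reconstructing the proof the author chose to outsource. The sketch is correct in outline: the recursion $(\nu+1)\Pi_{\nu+1}(x) \leq \sum_p\sum_{a\geq 1}\Pi_\nu(x/p^a)$, the Chebyshev base case, and the Mertens-based summation are exactly the ingredients Hardy and Ramanujan use. Your identification of the constant management as the delicate point is also accurate; in the original paper this is handled by a careful partial-summation computation rather than the heuristic split at $p = \sqrt{x}$ you suggest, but the idea is the same. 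One small inaccuracy: the bound does not literally become ``trivial'' for $\nu \gtrsim \log\log x$ in the sense of exceeding $x$ (indeed, summing the right-hand side over all $\nu$ gives $O(x)$), so the induction really must be carried out uniformly in $\nu$ rather than only for a bounded range.
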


By maximizing the above in terms of $\nu$ it is easy to see that
\begin{cor}\label{maxDivDensityCor}
$$
\Pi_n(N) = O\left(\frac{N}{\sqrt{\log\log(N)}}\right).
$$
\end{cor}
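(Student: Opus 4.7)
The plan is to simply maximize the bound from Lemma \ref{primeDivsLem} over $\nu$ and invoke Stirling's approximation. Write $L = \log\log x + C$; then the lemma gives
$$
\Pi_n(x) \leq \frac{Kx}{\log x} \cdot \frac{L^{n-1}}{(n-1)!}.
$$
Since the right-hand side depends on $n$ only through the factor $L^{n-1}/(n-1)!$, which is (up to normalization) a Poisson mass function in $n-1$ with parameter $L$, the maximum over $n$ occurs near $n-1 = \lfloor L\rfloor$.

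The first step is to take $\nu_0 = \lfloor L \rfloor$ (handling the degenerate case where $x$ is small enough that $L \leq 1$ trivially, since there $\Pi_n(x) \leq x$ already dominates everything). The second step is to apply Stirling in the form $\nu_0! \geq c \sqrt{\nu_0}(\nu_0/e)^{\nu_0}$ for an absolute constant $c>0$, giving
$$
\frac{L^{\nu_0}}{\nu_0!} \leq \frac{1}{c\sqrt{\nu_0}}\left(\frac{Le}{\nu_0}\right)^{\nu_0}.
$$
Because $\nu_0 \leq L < \nu_0+1$, the factor $(L/\nu_0)^{\nu_0}$ is bounded by $(1+1/\nu_0)^{\nu_0} \leq e$, so the whole expression is $O(e^L/\sqrt{L})$.

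The third step is to combine: substituting back, and using $e^L = e^C \log x$, we get
$$
\Pi_n(x) \leq \frac{Kx}{\log x} \cdot O\!\left(\frac{e^L}{\sqrt{L}}\right) = O\!\left(\frac{x}{\sqrt{\log\log x}}\right),
$$
uniformly in $n$, which is the claim.

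There is no real obstacle here; the only subtlety is that the maximizing $\nu$ must be an integer, which is handled by taking $\nu_0 = \lfloor L\rfloor$ and absorbing the slack into the absolute constant in the $O(\cdot)$ notation. The bound also needs to make sense for small $x$ (where $\log\log x$ may be small or negative), but there the trivial bound $\Pi_n(x) \leq x$ suffices after adjusting the constant.
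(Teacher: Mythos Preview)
Your proof is correct and is exactly the argument the paper has in mind: the paper simply says ``By maximizing the above in terms of $\nu$ it is easy to see that'' before stating the corollary, and you have carried out precisely that maximization via Stirling's formula. There is nothing to add.
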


It is also easy to see from the above that most integers of size roughly $N$ have about $\log\log(N)$ prime factors.  In particular:
\begin{cor}\label{loglogDivsCor}
There is a constant $c>0$ so that for all $N$, the number of $b\leq N$ with $|\omega(b)-\log\log(N)|> \log\log(N)^{3/4}$ is at most
$$
2N \exp\left( -c \sqrt{\log\log(N)}\right).
$$
In particular, the fraction of $b\leq N$ with $|\omega(b)-\log\log(N)| < \log\log(N)^{3/4}$ goes to 1 as $N$ goes to infinity.
\end{cor}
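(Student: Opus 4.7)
The plan is to sum the Hardy--Ramanujan estimate of Lemma \ref{primeDivsLem} over the range of prime counts in question, and recognize the result as a Poisson-type tail. Write $L = \log\log N$ for brevity. Then, applying Lemma \ref{primeDivsLem} term by term with $n = \nu+1$,
$$
S := \#\{b\leq N : |\omega(b)-L| > L^{3/4}\} \;\leq\; \frac{KN}{\log N}\sum_{\nu :\, |\nu-(L-1)|>L^{3/4}}\frac{(L+C)^\nu}{\nu!}.
$$

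Next, since $e^{L+C} = e^{C}\log N$, the full sum $\sum_{\nu\geq 0}(L+C)^\nu/\nu!$ equals $e^{C}\log N$, so summing over all $\nu$ would just recover the trivial bound $S = O(N)$. What we really have on the right-hand side is $N$ times a two-sided tail of a Poisson distribution of mean $\mu := L+C$. I would invoke the standard Chernoff bounds, derivable from the moment generating function $\sum_{\nu\geq 0} e^{s\nu}\mu^\nu/\nu! = \exp(\mu e^{s})$ by Markov's inequality: for any $t>0$,
$$
\sum_{\nu\geq \mu+t}\frac{\mu^\nu}{\nu!}\;\leq\; e^\mu\exp\!\left(-\frac{t^2}{2(\mu+t)}\right),\qquad \sum_{\nu\leq \mu-t}\frac{\mu^\nu}{\nu!}\;\leq\; e^\mu\exp\!\left(-\frac{t^2}{2\mu}\right).
$$
Apply these with $\mu = L+C$ and $t$ a fixed positive multiple of $L^{3/4}$, chosen large enough to absorb the constant shift $C+1$ for all $N$ sufficiently large. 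Since $\mu, \mu+t = \Theta(L)$, both tails are then bounded by $e^{L+C}\exp(-c\sqrt L)$ for some absolute $c>0$, whence
$$
S \;\leq\; \frac{KN}{\log N}\cdot 2\,e^{L+C}\exp(-c\sqrt L) \;=\; 2Ke^{C} N\exp(-c\sqrt L).
$$
Re-labelling $c$ (and handling the finitely many small values of $N$ separately, where the bound is trivial) gives the required estimate $2N\exp(-c\sqrt{\log\log N})$. The second statement of the corollary is then immediate: this bound is $o(N)$ while the denominator, the count of all $b \leq N$, is $N$.

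The main obstacle is bookkeeping: verifying that the various constant shifts (between $L$, $L-1$, $\mu = L+C$, and the threshold $L^{3/4}$) do not degrade the tail bound, and that the factor $2Ke^{C}$ in front of $\exp(-c\sqrt L)$ can be absorbed into a slightly smaller choice of $c$ uniformly in $N$. All of the analytic input---Lemma \ref{primeDivsLem} and the Poisson Chernoff bound---is classical, so no new ideas are required.
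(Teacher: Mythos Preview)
Your argument is correct and is exactly the intended one: the paper states this corollary without proof, simply noting it is ``easy to see'' from Lemma~\ref{primeDivsLem}, and the natural way to see it is precisely to sum the Hardy--Ramanujan bound over the relevant range of $\nu$ and recognize a Poisson tail of parameter $\mu=L+C$ at deviation $\Theta(L^{3/4})$, yielding savings of $\exp(-c\sqrt{L})$. One minor wording quibble: you do not get to \emph{choose} $t$; rather, the threshold $|\nu-(L-1)|>L^{3/4}$ forces $t=L^{3/4}\pm(C+1)$ on the two sides, and you simply observe this is $\Theta(L^{3/4})$ for $N$ large, which is all the Chernoff bound needs.
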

We will use Corollary \ref{loglogDivsCor} to restrict our attention only to twists by $b$ with an appropriate number of prime divisors.

\section{Character Bounds}\label{charBoundSec}

Our main purpose in this section will be to prove the following Propositions:
\begin{prop}\label{characterBoundProp}
Fix positive integers $D,n,N$ with $4|D$, $\log\log N>1$, and $(\log\log N)/2 < n < 2\log\log N$, and let $c>0$ be a real number.  Let $d_{i,j},e_{i,j}\in \Z/2$ for $i,j=1,\ldots,n$ with $e_{i,j}=e_{j,i},d_{i,j}=d_{j,i},e_{i,i}=d_{i,i}=0$ for all $i,j$.  Let $\chi_i$ be a quadratic character with modulus dividing $D$ for $i=1,\ldots,n$.  Let $m$ be the number of indices $i$ so that at least one of the following hold:
\begin{itemize}
\item $e_{i,j}=1$ for some $j$ or
\item $\chi_i$ has modulus not dividing 4 or
\item $\chi_i$ has modulus exactly 4 and $d_{i,j}=0$ for all $j$.
\end{itemize}
Let $\epsilon(p)=(p-1)/2.$ Then if $m>0$
\begin{align}\label{charBoundEquation}
\left|\frac{1}{n!}
\sum_{S_{N,n,D}}
\prod_i \chi_i(p_i)
\prod_{i<j} (-1)^{\epsilon(p_i)\epsilon(p_j)d_{i,j}}\prod_{i<j} \left(\frac{p_i}{p_j}\right)^{e_{i,j}}
\right|
= O_{c,D}\left(Nc^{m}\right),
\end{align}
where $S_{N,n,D}$ is the set of $n$-tuples of distinct primes $p_1,\ldots,p_n$ so that $b=p_1\cdots p_n$ is relatively prime to $D$ and of size at most $N$.
\end{prop}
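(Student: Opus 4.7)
The strategy is to iteratively peel off one prime at a time from the inside of the sum, using a character-sum estimate for each bad index. I argue by induction on $m$; the base case $m=0$ is the trivial bound $\Pi_n(N)=O(N)$ from Lemma \ref{primeDivsLem}. For the inductive step, since the hypotheses are invariant under a simultaneous permutation of the indices $1,\ldots,n$, I may assume index $n$ is bad. Write the sum as
$$
\frac{1}{n!}\sum_{p_1,\ldots,p_{n-1}} F_1(p_1,\ldots,p_{n-1}) \sum_{p_n} \psi(p_n),
$$
where $F_1$ collects the terms involving only $p_1,\ldots,p_{n-1}$. Applying quadratic reciprocity to replace each $\left(\frac{p_i}{p_n}\right)^{e_{i,n}}$ by $\left(\frac{p_n}{p_i}\right)^{e_{i,n}}(-1)^{\epsilon(p_i)\epsilon(p_n)e_{i,n}}$ recasts the $p_n$-dependent factor as a Dirichlet character $\psi$ on $p_n$, whose conductor divides $4D\prod_{i:\,e_{i,n}=1}p_i$. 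The three badness conditions on $n$ are precisely what is needed to guarantee that $\psi$ is non-principal.

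To bound $\sum_{p_n\leq N/P}\psi(p_n)$ (where $P=p_1\cdots p_{n-1}$), I split the outer sum according to the conductor $q$ of $\psi$. When $q\leq(\log N)^A$ with $A=A(c)$ large, the Siegel--Walfisz theorem gives $\left|\sum_{p_n\leq N/P}\psi(p_n)\right|=O_A((N/P)(\log(N/P))^{-A})$, a factor of $c$ below the Mertens trivial bound. When $q>(\log N)^A$, I apply the trivial bound on the inner sum but exploit the constraint $\prod_{i:\,e_{i,n}=1}p_i>(\log N)^A/(4D)$ via a Rankin-type corollary of Lemma \ref{primeDivsLem}, which bounds the mass of such tuples by $O((\log N)^{-\delta A})$, again yielding the desired $c$-saving.

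For the iteration, I keep the cancellation in $F_1$ rather than trivially bounding it, and reinterpret the remaining $(n-1)$-fold sum as another instance of the proposition with $n-1$ variables, data restricted to $i,j<n$, and at least $m-1$ bad indices. In the subtle case where $n$ was bad only via $e_{i,n}=1$ for some $i<n$ that itself loses its badness after removing $p_n$, I instead peel off the pair $(i,n)$ in a single two-dimensional step, gaining a factor of $c^2$ and dropping $m$ by $2$ at once. Induction on $m$ then completes the proof.

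\textbf{Main obstacle.} The principal technical difficulty is the large-conductor regime, where Siegel--Walfisz fails and cancellation must be extracted indirectly from the rarity of such configurations. The parameter $A$ must be chosen uniformly in $m$, which is feasible because $m\leq n\leq 2\log\log N$ gives $c^m\geq(\log N)^{-2\log(1/c)}$, so any $A\gg\log(1/c)$ suffices. A secondary difficulty is the bookkeeping for when peeling a single prime causes a second index to drop out of the bad set; this is addressed by the paired removal step described above.
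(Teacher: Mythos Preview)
Your plan has a genuine gap in the large-conductor branch. After quadratic reciprocity, the character $\psi$ on $p_n$ has conductor essentially $4D\prod_{i:e_{i,n}=1}p_i$, and you claim the regime $q>(\log N)^A$ is rare enough to be killed by a Rankin-type bound. It is not. For a square-free $b=p_1\cdots p_n\leq N$ with $n\asymp\log\log N$, each prime factor $p_i$ lies below $(\log N)^A$ only with probability $O_A((\log\log\log N)/\log\log N)$; a \emph{typical} $p_i$ is far larger than any fixed power of $\log N$. Hence as soon as some $e_{i,n}=1$, the conductor of $\psi$ is almost always enormous, Siegel--Walfisz is unavailable, and the purported $O((\log N)^{-\delta A})$ mass bound is false. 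This is exactly the regime where the real work lies, and the paper treats it with an entirely different tool: a large-sieve bilinear estimate (Lemmas~\ref{largeSieveLem} and~\ref{LegendreBoundLem}) that bounds $\sum_{A\leq p_i,p_j,\ p_ip_j\leq X}a(p_i)b(p_j)\bigl(\tfrac{p_i}{p_j}\bigr)$ directly by $O(X\log X\cdot A^{-1/8})$, after fixing all the other primes. Only when one of $p_i,p_j$ is genuinely small (below the cutoff $A$) does the paper fix that prime and recurse, and then a further subtlety appears that your sketch does not address: the modulus grows to $Dp$, so one must control Siegel zeros of characters whose modulus depends on the running primes. The paper does this by excising in advance the $b$ divisible by the modulus $q(X,Y)$ of the single worst Siegel zero in a range, and then invoking the classical bound on the \emph{second}-worst zero.

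There is also a structural problem with your recursion. You write that after bounding $\sum_{p_n}\psi(p_n)$ you ``keep the cancellation in $F_1$'' and reinterpret the $(n-1)$-fold sum as another instance of the proposition. But once you have replaced the inner sum by an upper bound in absolute value, the outer sum becomes $\sum_{p_1,\ldots,p_{n-1}}|F_1|\cdot(\text{bound})$ with $|F_1|=1$; the oscillation of $F_1$ is gone and there is nothing of the required shape to which the inductive hypothesis applies. The paper's recursion (Lemmas~\ref{simpleInductiveStepLem} and~\ref{InductiveLem}) avoids this by never bounding and recursing on the same piece: when the distinguished prime is large the entire sum is estimated outright (no recursion), and when it is small that prime is \emph{fixed to a specific value} $p<A$, producing a genuine $(n-1)$-variable sum of the same form (with $D$ replaced by $Dp$ and $k$ dropped by one) to feed back into the induction.
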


Note that $m$ is the number of indices $i$ so that no matter how we fix the values of $p_j$ for the $j\neq i$ that the summand on the left hand side of Equation \ref{charBoundEquation} still depends on $p_i$. The index set $S_{N,n,D}$ above is a way of indexing (up to overcounting by a factor of $n!$) the set of integers $b\leq N$ that are squarefree, relatively prime to $D$ and have $\omega(b)=n$. This notation will be used throughout the rest of the paper. The sum in equation \eqref{charBoundEquation} can be thought of as a sum over such $b$ (the $1/n!$ term accounts for the overcounting) of a ``character'' defined by the $\chi_i,d_{i,j}$ and $e_{i,j}$. Proposition \ref{characterBoundProp} will allow us to show that the ``characters'' in which the Legendre symbols make a non-trivial appearance add a negligible contribution to our moments.

\begin{prop}\label{averageClassProp}
Let $n,N,D$ be positive integers with $\log\log N > 1$, and $(\log\log N)/2 < n < 2\log\log N$.  Let $G=((\Z/D)^*/((\Z/D)^*)^2)^n$.  Let $f:G\rightarrow \C$ be a function with $|f|_\infty\leq 1$.  Then
\begin{align}\label{averageClassEquation}
\frac{1}{n!}&\sum_{S_{N,n,D}} f(p_1,\ldots,p_n)= \left( \frac{1}{|G|}\sum_{g\in G}f(g)\right)\left(\frac{|S_{N,n,D}|}{n!}\right) + O_D\left( \frac{N(\log\log\log N)}{\log\log N}\right).
\end{align}
(Above $f(p_1,\ldots,p_n)$ is really $f$ applied to the vector of their reductions modulo $D$).
\end{prop}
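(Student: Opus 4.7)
The approach is Fourier analysis on the finite elementary abelian 2-group $G = ((\Z/D)^*/((\Z/D)^*)^2)^n$, isolating the trivial character to recover the main term of \eqref{averageClassEquation} and bounding the non-trivial contributions using Proposition~\ref{characterBoundProp}. Every character of $G$ factors as $\vec{\chi}(\vec{g}) = \prod_{i=1}^n \chi_i(g_i)$, with each $\chi_i$ a real quadratic Dirichlet character of conductor dividing $D$ that is trivial on $((\Z/D)^*)^2$. Writing $f(\vec{g}) = \sum_{\vec{\chi}} \hat{f}(\vec{\chi})\,\vec{\chi}(\vec{g})$ with $\hat{f}(\vec{\chi}) = |G|^{-1}\sum_{\vec{g}} f(\vec{g})\overline{\vec{\chi}(\vec{g})}$, the hypothesis $|f|_\infty \leq 1$ gives $|\hat{f}(\vec{\chi})| \leq 1$ uniformly, while the trivial character yields $\hat{f}(\vec{1}) = |G|^{-1}\sum_g f(g)$. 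Substituting this expansion into the LHS, the trivial character produces precisely the first term on the RHS of \eqref{averageClassEquation}.

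For the non-trivial characters, set $S_{\vec{\chi}}(N) := \frac{1}{n!}\sum_{\vec{p}}\prod_i \chi_i(p_i)$, so the total error is bounded by $\sum_{\vec{\chi}\neq\vec{1}} |S_{\vec{\chi}}(N)|$. Apply Proposition~\ref{characterBoundProp} with $d_{i,j}=e_{i,j}=0$: in this specialization, the Proposition's integer $m$ equals the number of indices $i$ with $\chi_i$ non-trivial (the only quadratic character of modulus dividing $2$ is trivial, so the listed conditions reduce to \emph{$\chi_i$ has conductor $>1$}). Hence $|S_{\vec{\chi}}(N)| = O_{c,D}(Nc^m)$ for any chosen $c > 0$. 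Writing $G' := (\Z/D)^*/((\Z/D)^*)^2$, the number of $\vec{\chi}$ with exactly $m$ non-trivial coordinates is $\binom{n}{m}(|G'|-1)^m \leq (n|G'|)^m/m!$, so the total error is bounded by
$$
O_{c,D}(N)\sum_{m=1}^n \frac{(n|G'|c)^m}{m!}
\;\leq\; O_{c,D}(N)\bigl(e^{n|G'|c}-1\bigr).
$$
Since $n = \Theta(\log\log N)$, choosing $c$ so that $n|G'|c \asymp \log\log\log N/\log\log N$---i.e.\ $c \asymp \log\log\log N/(n|G'|\log\log N)$---makes $e^{n|G'|c}-1$ of the required order $O_D(\log\log\log N/\log\log N)$.

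The main obstacle is this final balancing step: one must verify that the implicit constant in Proposition~\ref{characterBoundProp} remains $O_D(1)$ for this shrinking choice of $c$, which requires quantitative control over the $c$-dependence of that constant. This dependence should be readable off the proof of Proposition~\ref{characterBoundProp}, but if it turns out to be too sensitive, a fallback is to combine Cauchy--Schwarz with Parseval's identity $\sum_{\vec{\chi}}|\hat{f}(\vec{\chi})|^2 \leq 1$: this gives $\text{error} \leq \bigl(\sum_{\vec{\chi}\neq\vec{1}}|S_{\vec{\chi}}(N)|^2\bigr)^{1/2}$, and applying Proposition~\ref{characterBoundProp} to $|S_{\vec{\chi}}|^2$ effectively replaces $c$ by $c^2$, making the resulting exponential sum easier to control for mildly smaller $c$.
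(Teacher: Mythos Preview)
Your overall strategy---Fourier expansion on $G$, isolating the trivial character, bounding the non-trivial ones---is exactly the paper's. The difference lies in which character-sum estimate is invoked and how the balancing is handled.

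Your first route (the $\ell^1$ bound with shrinking $c$) and your Cauchy--Schwarz fallback both suffer from the same problem you correctly flagged: Proposition~\ref{characterBoundProp} only gives $O_{c,D}(Nc^m)$ with an implicit constant depending on $c$, and both routes still require $c\to 0$ with $N$ (for the fallback you need $n(|G'|-1)c^2\to 0$, i.e.\ $c=o((\log\log N)^{-1/2})$). So the fallback does not evade the issue; it merely shifts it.

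The paper resolves this by not using Proposition~\ref{characterBoundProp} at all here. It applies Cauchy--Schwarz (your fallback) together with the sharper precursor Proposition~\ref{simpleCharBoundProp}, whose bound is explicitly
\[
O\!\left(\frac{N}{\sqrt{\log\log N}}\right)\left(O\!\left(\frac{\log\log B}{n}\right)^{m}+O(\log N)^{-C}\right).
\]
Taking $B$ minimal (for fixed $D$ one has $\log B=O_D((\log\log N)^2)$, hence $\log\log B=O_D(\log\log\log N)$) gives a per-character bound of the shape $O_D(N/\sqrt{\log\log N})\cdot O_D(\log\log\log N/\log\log N)^m$, with constants depending only on $D$---no free parameter to balance. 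Squaring, summing over $\vec\chi$ with $m$ non-trivial coordinates, and using $(1+x)^n-1=O(nx)$ for $nx$ small then yields the stated error. (The paper also first reduces to $4\mid D$, required by both Propositions, by separating the case where some $p_i=2$.)

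Your instinct that ``the dependence should be readable off the proof of Proposition~\ref{characterBoundProp}'' is right: tracing that proof in the case $e_{i,j}\equiv 0$ lands precisely on Proposition~\ref{simpleCharBoundProp}. So the fix is simply to cite that proposition directly.
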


This Proposition says that the average of $f$ over such $S_{N,n,D}$ is roughly equal to the average of $f$ over $G$.  This will allow us to show that the average value of the remaining terms in our moment calculation equal what we would expect given Swinnerton-Dyer's result.

We begin with a Proposition that gives a more precise form of Proposition \ref{characterBoundProp} in the case when the $e_{i,j}$ are all 0.
\begin{prop}\label{simpleCharBoundProp}
Let $D,n,N$ be integers with $4|D$ with $\log\log N>1$.  Let $C>0$ be a real number.  Let $d_{i,j}\in \Z/2$ for $i,j=1,\ldots,n$ with $d_{i,j}=d_{j,i},d_{i,i}=0$.  Let $\chi_i$ be a quadratic character of modulus dividing $D$ for $i=1,\ldots,n$.  Suppose that no Dirichlet character of modulus dividing $D$ has an associated Siegel zero larger than $1-\beta^{-1}$.  Let $$B=\max(e^{(C+2)\beta \log\log N},e^{K(C+2)^2(\log D)^2 (\log\log (DN))^2},n\log^{C+2}(N))$$ for $K$ a sufficiently large absolute constant.  Suppose that $B^n < \sqrt{N}$.  Let $m$ be the number of indices $i$ so that either:
\begin{itemize}
\item $\chi_i$ does not have modulus dividing 4 or
\item $\chi_i$ has modulus exactly 4 and $d_{i,j}=0$ for all $j$.
\end{itemize}
Then
\begin{align}\label{simpleCharacterBoundEquation}
&\left|\frac{1}{n!}
\sum_{S_{N,n,D}}
\prod_i \chi_i(p_i)
\prod_{i<j} (-1)^{\epsilon(p_i)\epsilon(p_j)d_{i,j}}
\right|\\
& \ \ \ \ \ = O\left(\frac{N}{\sqrt{\log\log(N)}}\right)\left(O\left(\frac{\log\log B}{n}\right)^{m}+ (\log N)^{-C} \right).\notag
\end{align}
\end{prop}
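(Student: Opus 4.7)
The plan is a Selberg--Delange-style contour argument, preceded by a parity-conditioning step that linearizes the bilinear cross-term and followed by an $\mathbb{F}_2$-Gauss sum cancellation that isolates the $m$ guaranteed non-trivial character sums. The Siegel-zero hypothesis on $\beta$ is used to guarantee a zero-free region wide enough for all $L$-functions of modulus dividing $D$, while $B^n < \sqrt{N}$ controls the accumulated Perron-truncation error.

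The first step is to condition on the vector of parities $\vec{\epsilon} = (\epsilon(p_1),\ldots,\epsilon(p_n)) \in \{0,1\}^n$. Since $\epsilon(p)$ depends only on $p \bmod 4$ and $4 \mid D$, the cross-term becomes the scalar $(-1)^{Q(\vec{\epsilon})}$ with $Q(\vec{\epsilon}) = \sum_{i<j}\epsilon_i\epsilon_j d_{i,j}$, and the inner sum is over primes $p_i$ in prescribed residue classes mod $4$, weighted multiplicatively by $\prod_i\chi_i(p_i)$. Using $\mathbf{1}_{\epsilon(p)=\epsilon_i} = \frac{1}{2}(1+(-1)^{\epsilon_i}\chi_{-4}(p))$ and Perron's formula, the full sum is recast as a linear combination over $\vec{\delta} \in \{0,1\}^n$ of contour integrals
\[
I_{\vec{\delta}}(N) = \frac{1}{2\pi i}\int_{(c)} \frac{N^s}{s}\prod_i A_{\chi_i\chi_{-4}^{\delta_i}}(s)\,ds, \qquad A_\psi(s) := \sum_{(p,D)=1}\frac{\psi(p)}{p^s},
\]
with coefficients $T_{\vec{\delta}} = 2^{-n}\sum_{\vec{\epsilon}}(-1)^{Q(\vec{\epsilon})+\vec{\epsilon}\cdot\vec{\delta}}$, up to errors from Perron truncation at height $B$ and from collapsing pairs of primes in the tuple (all controlled by $B^n < \sqrt{N}$).

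The key combinatorial input is that every index $i$ counted by $m$ produces a non-trivial character $\chi_i\chi_{-4}^{\delta_i}$ in each surviving term. If $\chi_i$ has modulus not dividing $4$, this is automatic for any $\delta_i$. If $\chi_i = \chi_{-4}$ and $d_{i,j}=0$ for all $j$, then $\epsilon_i$ does not appear in $Q$, so $T_{\vec{\delta}} = 0$ unless $\delta_i = 0$, in which case $\chi_i\chi_{-4}^{\delta_i}=\chi_{-4}$ is non-trivial. Hence in every nonzero summand, $\prod_i A_{\chi_i\chi_{-4}^{\delta_i}}$ contains at least $m$ non-trivial character factors, which by standard bounds on $\log L(s,\psi)$ in a zero-free region of width $\beta^{-1}$ and height $B$ are each $O(\log\log B)$ on a Hankel contour around $s=1$, while each trivial factor carries a $-\log(s-1)$ singularity.

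Shifting the Perron contour to such a Hankel path and applying the standard Selberg--Delange estimate $\frac{1}{2\pi i}\int\frac{N^s}{s}(-\log(s-1))^k\,ds \sim kN(\log\log N)^{k-1}/\log N$, each $\vec{\delta}$ with $k \leq n-m$ trivial factors contributes $O\bigl(kN(\log\log N)^{k-1}(\log\log B)^{n-k}/\log N\bigr)$. Summing over $\vec{\delta}$ via the binomial theorem, dividing by $n!$, and applying Stirling with $n \sim \log\log N$ yields $O(N/\sqrt{\log\log N})\cdot O(\log\log B/n)^m$; the additive error $O(N/\log^C N)$ comes from the vertical tails of the contour at $\mathrm{Re}(s) = 1-\beta^{-1}$. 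The principal difficulty is the simultaneous management of $n$, $\beta$, and the truncation height $B$ inside the contour: one must show that the zero-free region is both wide enough (from the Siegel-zero hypothesis) and tall enough (from the definition of $B$) to absorb the polynomial-in-$\log N$ factors from the $2^n$ summands and the Perron truncation error, while still producing the stated power of $(\log\log B)/n$.
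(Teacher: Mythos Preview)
Your route is genuinely different from the paper's. The paper proves the bound by an elementary induction on $m$ with no complex analysis at all: if $m\geq 1$, choose a bad index, split the sum according to whether the corresponding prime exceeds $B$, bound the large-prime case by a character-sum estimate over that single prime with the others fixed, and for the small-prime case fix that prime at some $p<B$ and recurse on the remaining $n-1$ primes with $m$ replaced by $m-1$. The factor $\log\log B/n$ then emerges because $\sum_{p<B}1/p=O(\log\log B)$ and the recursion contributes a $1/n$.

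Your parity-conditioning and the Gauss-sum observation forcing $\delta_i=0$ on bad indices of the second type are both correct, but the step ``summing over $\vec\delta$ via the binomial theorem \ldots\ yields $O(N/\sqrt{\log\log N})\cdot O(\log\log B/n)^m$'' does not go through as written. You bound each of the $n-k$ non-trivial factors by $O(\log\log B)$, so a term with $k$ trivial factors contributes $\ll Nk(\log\log N)^{k-1}(\log\log B)^{n-k}/\log N$. Summing this over the free choices of $\delta_i$ at the $n-m$ good indices produces, by the binomial theorem, a factor $(\log\log N+O(\log\log B))^{n-m-1}$ in place of the $(\log\log N)^{n-m-1}$ you need. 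Since the hypotheses only force $\log\log B<\log\log N$, the ratio $\bigl(1+O(\log\log B/\log\log N)\bigr)^{n}$ can be a fixed positive power of $\log N$ when $n\asymp\log\log N$, and nothing in your argument absorbs it: the Gauss-sum weights satisfy only $\sum_{\vec\delta}|T_{\vec\delta}|\leq 2^{r/2}$ with $r=\mathrm{rank}(Q)$, which in general does not compensate. For a concrete failure, take all $\chi_i$ trivial and $Q$ non-degenerate (so $m=0$ and the target is the trivial bound $\Pi_n(N)$); then every $\vec\delta$ survives, and your estimate overshoots $\Pi_n(N)$ by a power of $\log N$ when $B$ is near its allowed maximum. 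The repair is to observe that at every \emph{good} index the only non-trivial character that can occur is $\chi_{-4}$, whose prime sum $A_{\chi_{-4}}(s)$ is bounded by an absolute $O(1)$ near $s=1$; with that refinement the binomial factor collapses to $(1+O(1)/\log\log N)^{n-m}=O(1)$ and the argument can be salvaged. This distinction is essential and is absent from your sketch. Two secondary gaps: you invoke $n\sim\log\log N$, which is not among the hypotheses, and the Hankel-contour estimate for a product of $n$ prime-sums with $n$ growing is a uniform Sathe--Selberg statement that requires real work you have not indicated.
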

Note once again that $m$ is the number of $i$ so that if the values of $p_j$ for $j\neq i$ are all fixed, the resulting summand will still depend on $p_i$.

The basic idea of the proof will be by induction on $m$.  If $m=0$, we can bound by the number of terms in our sum, giving a bound of $\Pi_n(N)$, which we bound using Corollary \ref{maxDivDensityCor}.  If $m>0$, there is some $p_i$ so that no matter how we set the other $p_j$, our character still depends on $p_i$.  We split into cases based on whether $p_i>B$.  If $p_i>B$, we fix the values of the other $p_j$, and use bounds on character sums.  For $p_i\leq B$, we note that this happens for only about a $\frac{\log\log B}{n}$ fraction of the terms in our sum, and for each possible value of $p_i$ inductively bound the remaining sum. To deal with the first case we prove the following Lemma:

\begin{lem}\label{CharacterSumLem}
Let $K$ be a sufficiently large constant.  Take $\chi$ any non-trivial Dirichlet character of modulus at most $D$ and with no Siegel zero more than $1-\beta^{-1}$, $N,C>0$ integers, and $X$ any integer with
$$
X> \max(e^{(C+2)\beta \log\log N},e^{K(C+2)^2(\log D)^2 (\log\log (DN))^2}).
$$
Then
$$
\left|\sum_{p\leq X} \chi(p)\right| \leq O( X \log^{-C-2}(N)).
$$
Where the sum above is over primes $p$ less than or equal to $X$.
\end{lem}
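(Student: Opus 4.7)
The strategy is standard analytic number theory: reduce the sum over primes to a sum of $\Lambda(n)\chi(n)$, then apply the truncated explicit formula and the classical zero-free region for Dirichlet $L$-functions. By subtracting the (negligible, $O(X^{1/2}\log X)$) contribution of prime powers and performing partial summation, it suffices to prove
$$\psi(X,\chi) := \sum_{n\leq X}\Lambda(n)\chi(n) = O\!\left(X\log^{-C-2}(N)\right),$$
uniformly for $X$ in the stated range; the passage to $\sum_{p\leq X}\chi(p)$ via $\theta(X,\chi):=\sum_{p\leq X}\chi(p)\log p$ then loses only a $\log X$ factor that is harmless given the extra exponent $+2$ in the statement.

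To estimate $\psi(X,\chi)$, I would use the truncated explicit formula
$$\psi(X,\chi) = -\sum_{|\Im(\rho)|\leq T}\frac{X^\rho}{\rho} + O\!\left(\frac{X\log^2(DXT)}{T}\right),$$
where $\rho$ ranges over non-trivial zeros of $L(s,\chi)$, and $T$ is to be chosen. Split the sum into (i) the contribution of the possible real exceptional zero $\rho_0$ of $L(s,\chi)$ and (ii) the contribution of all other zeros. By hypothesis $\rho_0 \leq 1-\beta^{-1}$, so (i) is bounded by $X^{1-\beta^{-1}} = X\exp(-\beta^{-1}\log X)$; the first lower bound on $X$ gives $\beta^{-1}\log X \geq (C+2)\log\log N$, whence (i) is $O(X\log^{-C-2}(N))$.

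For (ii), invoke the classical zero-free region: apart from $\rho_0$, every zero of $L(s,\chi)$ satisfies $\sigma \leq 1 - c/\log(D(|t|+2))$ for an absolute $c>0$. Combined with the standard estimate $N(T,\chi) \ll T\log(DT)$ for the number of non-trivial zeros with $|\Im(\rho)|\leq T$, summation by parts over the zeros yields
$$\sum_{\substack{\rho\neq\rho_0\\ |\Im(\rho)|\leq T}}\frac{X^\rho}{\rho} \ll X\exp\!\left(-\frac{c'\log X}{\log(DT)}\right)\log^2(DT).$$
Choosing $T=\exp(\sqrt{\log X})$ balances this against the truncation error and yields an overall bound of order $X\exp(-c''\sqrt{\log X}/\log D)$, up to polylogarithmic factors in $DX$. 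Demanding this to be $\leq X\log^{-C-2}(N) = X\exp(-(C+2)\log\log N)$ rearranges to $\log X \geq K(C+2)^2(\log D)^2(\log\log(DN))^2$, where the $\log\log(DN)$ rather than $\log\log N$ is needed to absorb the stray logarithms of $DX$ coming from the truncation and the zero-counting estimate; this is exactly the second lower bound in the hypothesis.

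\textbf{Main obstacle.} The result is essentially a uniform Siegel--Walfisz estimate with explicit dependence on both $\beta$ and $D$, so nothing conceptually new is needed; the work is entirely in bookkeeping. The trickiest point is verifying that the savings from the zero-free region really degrade no worse than $\exp(-c\sqrt{\log X}/\log D)$ in the regime where $\log D$ is a non-negligible fraction of $\sqrt{\log X}$, and that the secondary polylog factors can all be absorbed into the $(\log\log(DN))^2$ in the hypothesis. In practice, I would invoke a version of the prime number theorem for arithmetic progressions with explicit uniformity (e.g., from Davenport's \emph{Multiplicative Number Theory}, Ch.~20, or Iwaniec--Kowalski, Ch.~5) rather than rederiving the explicit formula from scratch.
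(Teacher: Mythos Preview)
Your proposal is correct and follows essentially the same route as the paper. The paper simply cites Iwaniec--Kowalski, Theorem 5.27, to obtain
\[
\sum_{n\leq Y}\chi(n)\Lambda(n) = Y\,O\!\left(Y^{-\beta^{-1}} + \exp\!\left(\frac{-c\sqrt{\log Y}}{\log D}\right)(\log D)^4\right),
\]
subtracts the $O(\sqrt{Y})$ prime-power contribution, and then Abel-sums to pass to $\sum_{p\leq X}\chi(p)$; the two hypotheses on $X$ are then used exactly as you describe to kill the exceptional-zero term and the zero-free-region term respectively. Your sketch of the explicit formula with $T=\exp(\sqrt{\log X})$ is precisely the derivation behind that cited theorem, so you have effectively unpacked what the paper invokes as a black box.
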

\begin{proof}
Theorem 5.27 of \cite{kn:IK} implies that for any $Y$ that for some constant $c>0$,
$$
\sum_{n\leq Y} \chi(n)\Lambda(n) = Y\cdot O\left(Y^{-\beta^{-1}} + \exp\left(\frac{-c\sqrt{\log(Y)}}{\log D} \right)(\log D)^4 \right).
$$
Note that the contribution to the above coming from $n$ a power of a prime is $O(\sqrt{Y})$.  Using Abel summation to reduce this to a sum over $p$ of $\chi(p)$ rather than $\chi(p)\log(p)$, we find that
$$
\sum_{p\leq X} \chi(p) \leq X\cdot O\left(X^{-\beta^{-1}} + \exp\left(\frac{-c\sqrt{\log(X)}}{\log D} \right)(\log D)^4 \right)+O(\sqrt{X}).
$$
The former term is sufficiently small since by assumption $X>e^{(C+2)\beta \log\log N}$.  The latter term is small enough since $X>e^{K(C+2)^2(\log D)^2 (\log\log (DN))^2}.$  The last term is small enough since clearly $X>\log^{2C+4}(N)$.
\end{proof}

For positive integers $n,N,D$, and $S$ a set of prime numbers, denote by $Q(n,N,D,k,S)$ the maximum possible absolute value of a sum of the form given in Equation \eqref{simpleCharacterBoundEquation} with $m\geq k$, with the added restriction that none of the $p_i$ lie in $S$.  In particular a sum of the form
$$
\frac{1}{n!}\sum_{S_{N,n,D'}}
\prod_i \chi_i(p_i)
\prod_{i<j} (-1)^{\epsilon(p_i)\epsilon(p_j)d_{i,j}}
$$
where $\chi_i$ are characters of modulus dividing $D$, $d_{i,j}\in\{0,1\}$, and $$D'=D\cdot \prod_{p\in S}p.$$

We write the inductive step for our main bound as follows:
\begin{lem}\label{simpleInductiveStepLem}
For integers, $n,D,N,M,C$ and $B$ with $$B>\max(e^{(C+2)\beta \log\log M},e^{K(C+2)^2(\log D)^2 (\log\log (DM))^2},n\log^{C+2}(M)),$$ where $1-\beta^{-1}$ is the largest Siegel zero of a Dirichlet character of modulus dividing $D$ and $K$ a sufficiently large constant, $1\leq k\leq n$ and $S$ a set of primes $\leq B$, then $Q(n,N,D,k,S)$ as described above is at most
\begin{align*}
O( N \log(N)\log^{-C-2}(M)) + \frac{1}{n}\sum_{\begin{subarray}{l} p < B \\ p \not\in S \end{subarray}} Q(n-1,N/p,D,k-1,S\cup\{p\}).
\end{align*}
\end{lem}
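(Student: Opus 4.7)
The plan is to induct by isolating a single distinguished index. Since $m \geq k \geq 1$, there exists an index $i_0$ (say the smallest such) for which either $\chi_{i_0}$ has modulus not dividing $4$, or $\chi_{i_0}$ has modulus exactly $4$ and $d_{i_0,j} = 0$ for all $j$. I would split the ordered sum defining $Q(n,N,D,k,S)$ into two pieces according as $p_{i_0} > B$ or $p_{i_0} \leq B$, and show each piece contributes one of the two terms in the claimed bound.

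For the large case, fix the other $p_j$'s and sum over $p_{i_0}$. The part of the summand that depends on $p_{i_0}$ collects into $\chi_{i_0}(p_{i_0}) \cdot \chi_{-1}^{\delta}(p_{i_0})$, where $\chi_{-1}$ is the non-trivial character modulo $4$ (whose modulus divides $D$ because $4 \mid D$) and $\delta = \sum_{j \neq i_0} \epsilon(p_j)\, d_{i_0,j} \bmod 2$. The defining property of $i_0$ forces this combined character to be non-trivial of modulus dividing $D$: if $\chi_{i_0}$ has modulus not dividing $4$ the product inherits the same property, while if $\chi_{i_0}$ has modulus exactly $4$ with $d_{i_0,\cdot} \equiv 0$, then $\delta = 0$ and the product equals $\chi_{i_0}$. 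The hypotheses on $B$ are precisely what Lemma \ref{CharacterSumLem} requires, so $|\sum_{B < p \leq X} \chi_{i_0}\chi_{-1}^\delta(p)| = O(X \log^{-C-2}(M))$ with $X = N/\prod_{j\neq i_0} p_j$ (and the partial sum up to $B$ is dominated by the same estimate). Summing over the remaining primes and absorbing the ordering factor $(n-1)!/n!=1/n$ yields a total bound of
$$O(N\log^{-C-2}(M))\cdot\frac{(\log\log N + O(1))^{n-1}}{n!},$$
and a short Stirling computation shows this last fraction is $O(\log N)$ uniformly over $n \in [(\log\log N)/2,\; 2\log\log N]$, delivering the first term.

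For the small case, fix $p_{i_0} \leq B$ with $p_{i_0}\notin S$. The remaining sum over the $n-1$ primes $p_j$ ($j\neq i_0$) matches the defining form of $Q$ with $n$ replaced by $n-1$, $N$ by $N/p_{i_0}$, $S$ by $S\cup\{p_{i_0}\}$, and characters $\chi_j$ replaced by $\widetilde{\chi}_j = \chi_j \cdot \chi_{-1}^{\epsilon(p_{i_0})d_{i_0,j}}$ (still of modulus dividing $D$). The key bookkeeping is that the new ``$m$'' is at least $k-1$: any $j\in T\setminus\{i_0\}$ (where $T$ denotes the original essential set of indices) remains essential in the reduced problem, since multiplication by a power of $\chi_{-1}$ does not change whether the modulus divides $4$, and in the modulus-$4$-with-$d_{j,\cdot}\equiv 0$ subcase we have $d_{j,i_0}=0$, so $\widetilde{\chi}_j = \chi_j$ and the retained $d_{j,\ell}$ ($\ell\neq i_0$) remain all zero. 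Hence the remaining sum is bounded by $Q(n-1, N/p_{i_0}, D, k-1, S\cup\{p_{i_0}\})$, and converting $1/n!$ into $(1/n)\cdot 1/(n-1)!$ produces the advertised $1/n$ prefactor.

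I expect the main obstacle to be the Stirling estimate showing that the crude bound $(\log\log N)^{n-1}/n!$ is $O(\log N)$ uniformly in the allowed range of $n$; the quantity is maximized near $n \approx \log\log N$, where it is essentially equal to $\log N$, but careful accounting at the endpoints of $[(\log\log N)/2,\; 2\log\log N]$ is needed to confirm the uniform bound that lets the $\log(N)$ in the first error term do the work.
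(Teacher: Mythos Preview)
Your argument is correct and follows the same two-case split as the paper: isolate a distinguished index, apply Lemma~\ref{CharacterSumLem} when that prime exceeds $B$, and recurse when it does not. The only difference is bookkeeping in the large case: after obtaining the inner bound $O((N/P)\log^{-C-2}(M))$ with $P=\prod_{j\neq i_0}p_j$, the paper simply sums over all integers $P\leq N$ (each having at most $(n-1)!$ ordered prime factorizations), getting $\tfrac{1}{n}\sum_{P\leq N} O((N/P)\log^{-C-2}(M))=O(N\log N\,\log^{-C-2}(M))$ directly, with no appeal to Stirling.

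Your route through $(\log\log N+O(1))^{n-1}/n!$ also works, but note that the lemma as stated carries \emph{no} hypothesis placing $n$ in $[(\log\log N)/2,\,2\log\log N]$, so you may not invoke that range; fortunately you do not need it, since for every $n\geq 1$ one has
\[
\frac{(\log\log N+O(1))^{n-1}}{n!}\;<\;\frac{(\log\log N+O(1))^{n-1}}{(n-1)!}\;\leq\; e^{\log\log N+O(1)}\;=\;O(\log N)
\]
by comparing a single term to the full exponential series. So your flagged obstacle dissolves without any Stirling analysis.
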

\begin{proof}
Since $k\geq 1$, there must be an $i$ so that either $\chi_i$ has modulus bigger than 4 or has modulus exactly 4 and all of the $d_{i,j}$ are 0.  Without loss of generality, $n$ is such an index.  We split our sum into cases depending on whether $p_n\geq B$.  For $p_n\geq B$, we proceed by fixing all of the $p_j$ for $j\neq n$ and summing over $p_n$.  Letting $P=\prod_{i=1}^{n-1} p_i$, we have
$$
\sum_{P=1}^{N/B} \frac{1}{n!}\sum_{\begin{subarray}{l} P = p_1\ldots p_{n-1}\\ p_i \textrm{ distinct} \\ p_i \not\in S \\ (D,P)=1 \end{subarray}} a \sum_{\begin{subarray}{l} B\leq p_n \leq N/P \\ p_n \neq p_j \end{subarray}} \chi(p_n)
$$
where $a$ is some constant of norm 1 depending on $p_1\ldots p_{n-1}$, and $\chi$ is a non-trivial character of modulus dividing $D$, perhaps also depending on $p_1,\ldots, p_{n-1}$.  The condition that $p_n\neq p_j$ alters the value of the inner sum by at most $n$.  With this condition removed, we may bound the inner sum by applying Lemma \ref{CharacterSumLem} (taking the difference of the terms with $X=N/P$ and $X=B$).  Hence the value of the inner sum is at most $O(N/P\log^{-C-2}(M)+n)$.  Since $N/P \geq B \geq n \log^{C+2}(M)$, this is just $O(N/P\log^{-C-2}(M))$. Note that for each $P$, there are at most $(n-1)!$ ways of writing it as a product of $n-1$ primes (since the primes will be unique up to ordering).  Hence, ignoring the extra $1/n$ factor, the sum above is at most
$$
\sum_{P=1}^{N/B} O(N/P \log^{-C-2}(M)) = O(N \log (N) \log^{-C-2}(M)).
$$

For $p_n< B$, we fix $p_n$ and consider the sum over the remaining $p_i$.  We note that for $p$ a prime not in $S$ and relatively prime to $D$, this sum is plus or minus one over $n$ times a sum of the type bounded by $Q(n-1,N/p,D,k-1,S\cup\{p\})$.  In particular, we note that since by assumption the value of $m$ for our original sum was at least $k$, that upon fixing this value of $p_n$, the value of $m$ for the resulting sum is at least $k-1$ and is thus bounded by $Q(n-1,N/p,D,k-1,S\cup\{p\})$.  This completes our proof.
\end{proof}

We are now prepared to Prove Proposition \ref{simpleCharBoundProp}
\begin{proof}
We prove by induction on $k$ that for $n,N,D,C,M,\beta,B$ as above with
$$B>\max(e^{(C+2)\beta \log\log M},e^{K(C+2)^2(\log D)^2 (\log\log (DM))^2},n\log^{C+2}(M)),$$ and $S$ a set of primes $\leq B$, and $c$ a sufficiently large constant that
\begin{align}\label{SimpleInductionEquation}
Q(n,N,D,k,S) \leq & c\left(\frac{N}{\sqrt{\log\log(N/B^n)}}\right)\left(\frac{c\log\log B}{n} \right)^k \\
 & + c N\log(N)\log^{-C-2}(M)\sum_{a=0}^{k-1}\left(\frac{c \log\log B}{n} \right)^a.\notag
\end{align}
Plugging in $M=N$, $k=m$, $S=\emptyset$, and
$$B=\max(e^{(C+2)\beta \log\log N},e^{K(C+2)^2(\log D)^2 (\log\log (DN))^2},n\log^{C+2}(N)),$$
yields the necessary result.

We prove Equation \ref{SimpleInductionEquation} by induction on $k$.  For $k=0$, the sum is at most the sum over $b=p_1\ldots p_n$ with appropriate conditions of $\frac{1}{n!}$.  Since each such $b$ can be written as such a product in at most $n!$ ways, this is at most $\Pi_n(N)$, which by Corollary \ref{maxDivDensityCor} is at most $c\left(\frac{N}{\sqrt{\log\log N}}\right)$ for some constant $c$, as desired.

For larger values of $k$, we use the inductive hypothesis and Lemma \ref{simpleInductiveStepLem} to bound $Q(n,N,D,k,S)$ by
\begin{align*}
& c N\log(N)\log^{-C-2}(M) + \frac{1}{n}\sum_{p<B} Q(n-1,N/p,D,k-1,S') \\
\leq &  c N\log(N)\log^{-C-2}(M) \\ & + \frac{1}{n} \sum_{p<B} \frac{1}{p}c\left(\frac{N}{\sqrt{\log\log(N/pB^{n-1})}}\right)\left(\frac{c\log\log B}{n-1} \right)^{k-1}\\
& + \frac{1}{n}\sum_{p<B} \frac{1}{p} c N\log(N)\log^{-C-2}(M)\sum_{a=0}^{k-2}\left(\frac{c\log\log B}{n-1} \right)^a\\
\leq &  c N\log(N)\log^{-C-2}(M) \\ & + c\left(\frac{N}{\sqrt{\log\log(N/B^{n})}}\right)\left(\frac{c\log\log B}{n} \right)^{k}\\
& + cN\log(N)\log^{-C-2}(M)\sum_{a=0}^{k-2}\left(\frac{c\log\log B}{n} \right)^{a+1}\\
\leq & c\left(\frac{N}{\sqrt{\log\log(N/B^{n})}}\right)\left(\frac{c\log\log B}{n} \right)^{k}\\
& + cN\log(N)\log^{-C-2}(M)\sum_{a=0}^{k-1}\left(\frac{c\log\log B}{n} \right)^{a}.
\end{align*}
Above we use that
$$
\frac{1}{n}\left(\frac{1}{n-1} \right)^a\sum_{p<B}\frac{1}{p} \leq c\log\log B \left(\frac{1}{n}\right)^{a+1}
$$ for all $a\leq n$ if $c$ is sufficiently large.  This completes the inductive hypothesis, proving Equation \ref{SimpleInductionEquation}, and completing the proof.
\end{proof}

We are now prepared to prove Proposition \ref{averageClassProp}
\begin{proof}
First note that we can assume that $4|D$.  This is because if that is not the case, we can split our sum up into two cases, one where none of the $p_i$ are 2, and one where one of the $p_i$ is 2.  In either case we get a sum of the same form but now can assume that $D$ is divisible by 4.  We assume this so that we can use Proposition \ref{simpleCharBoundProp}.

It is clear that the difference between the left hand side of Equation \ref{averageClassEquation} and the main term on the right hand side is
\begin{align*}
\frac{1}{|G|}\left(\sum_{\chi \in \widehat{G}\backslash \{1\}} \left(\frac{1}{n!}\sum_{S_{N,n,D}} \chi(p_1,\ldots,p_n) \right)\left(\sum_{g\in G} f(g)\chi(g) \right) \right).
\end{align*}
Using Cauchy-Schwarz we find that this is at most
$$
\frac{1}{|G|}\sqrt{|G|}|f|_2 \left(\sum_{\chi \in \widehat{G}\backslash \{1\}} \left|\frac{1}{n!}\sum_{S_{N,n,D}} \chi(p_1,\ldots,p_n) \right|^2\right)^{1/2}.
$$
We note that $|f|_2\leq \sqrt{|G|}$ and hence that $\frac{1}{|G|}\sqrt{|G|}|f|_2\leq 1$.
Bounding the character sum using Proposition \ref{simpleCharBoundProp} (using the minimal possible value of $B$), we get $O\left(\frac{N^2}{\log\log N}\right)$ times
$$
\sum_{\chi \in \widehat{G}\backslash \{1\}} O_D\left(\frac{\log\log\log N}{\log\log N} \right)^{2s}.
$$
Where above $s$ is the number of components on which $\chi$ (thought of as a product of characters of $(\Z/D\Z)^*$) is non-trivial.
Since each component of $\chi$ can either be trivial or have one of finitely many non-trivial values (each of which contributes $O_D((\log\log\log N)^2/(\log\log N)^2)$) and this can be chosen independently for each component, the inner sum is
\begin{align*}
\left( 1 + O_D\left( \frac{\log\log\log N}{\log \log N}\right)^2\right)^n-1 & = \exp\left( O_D\left( \frac{(\log\log\log N)^2}{\log\log N}\right)\right)-1 \\
& = O_D\left(\frac{(\log\log\log N)^2}{\log\log N}\right).
\end{align*}
Hence the total error is at most
$$
\frac{1}{|G|}\sqrt{|G|}\sqrt{|G|} O_D\left(\left(\frac{N^2\log\log\log^2(N)}{\log\log^2(N)}  \right)^{1/2}\right) = O_D\left( \frac{N\log\log\log(N)}{\log\log(N)}\right).
$$
\end{proof}

The proof of Proposition \ref{characterBoundProp} is along the same lines as the proof of Proposition \ref{simpleCharBoundProp}.  Again we induct on $m$.  This time, we use Lemma \ref{simpleInductiveStepLem} as our base case (when all of the $e_{i,j}$ are 0).  If some $e_{i,j}$ is non-zero, we break into cases based on whether or not $p_i$ and $p_j$ are larger than some integer $A$ (which will be some power of $\log(N)$).  If both, $p_i$ and $p_j$ are large, then fixing the remaining primes and summing over $p_i$ and $p_j$ gives a relatively small result.  Otherwise, fixing one of these primes at a small value, we are left with a sum of a similar form over the other primes.  Unfortunately, doing this will increase our $D$ by a factor of $p_i$, and may introduce characters with bad Siegel zeroes.  To counteract this, we will begin by throwing away all terms in our sum where $D\prod_i p_i$ is divisible by the modulus of the worst Siegel zero in some range, and use standard results to bound the badness of other Siegel zeroes.

We begin with some Lemmas that will allow us to bound sums of Legendre symbols of $p_i$ and $p_j$ as they vary over primes.
\begin{lem}\label{largeSieveLem}
Let $Q$ and $N$ be positive integers with $Q^2 \geq N$.  Let $a$ be a function $\{1,2,\ldots,N\}\rightarrow \C$, supported on square-free numbers.  Then we have that
$$
\sum_{\begin{subarray}{l} \chi \ \textrm{quadratic character} \\ \textrm{of modulus } p \textrm{ or } 4p, \ \leq Q\end{subarray}}\left| \sum_{n=1}^N a_n \chi(n) \right|^2 = O\left(Q\sqrt{N}||a||^2\right)
$$
where the sum is over quadratic characters whose modulus is either a prime or four times a prime and is less than or equal to $Q$, and
where $||a||^2=\sum_{n=1}^N |a_n|^2$ is the squared $L^2$ norm.
\end{lem}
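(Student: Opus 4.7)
The plan is to reduce this to a quadratic large-sieve inequality in the style of Heath-Brown. I would begin by expanding the square:
\[
\sum_\chi \Bigl|\sum_{n\leq N} a_n\chi(n)\Bigr|^2 = \sum_{n,m\leq N} a_n\overline{a_m}\,T(n,m),\qquad T(n,m) := \sum_\chi \chi(nm),
\]
using that $\overline{\chi}=\chi$ for real characters. Since $a$ is supported on squarefree integers, $nm$ is a perfect square precisely when $n=m$, so the contribution splits cleanly into a diagonal and an off-diagonal part.

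The diagonal satisfies $T(n,n)\leq \#\{\chi\} \leq \pi(Q)+\pi(Q/4) = O(Q/\log Q)$ by the prime number theorem, so it contributes at most $O(Q|a|^2/\log Q)$, which is absorbed in the desired $O(Q\sqrt{N}\,|a|^2)$ since $N\geq 1$. For the off-diagonal, write $nm=d^2 k$ with $d=\gcd(n,m)$ and $k$ the squarefree kernel of $nm$; then $k>1$ since $n\neq m$ and both are squarefree, and $\chi(nm)=\chi(k)$ whenever $(d,\mathrm{cond}(\chi))=1$. Applying quadratic reciprocity, each inner sum $\sum_\chi \chi(k)$ becomes a sum $\sum_{p\leq Q}\psi(p)$ for some nontrivial Dirichlet character $\psi$ of conductor at most $4k \leq 4N^2$ (plus the analogous $\chi_4$-twisted piece coming from the modulus-$4p$ characters).

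To assemble this into the claimed estimate, I would invoke the quadratic large sieve of Heath-Brown, which for real primitive characters of squarefree modulus $\leq Q$ yields
\[
\sum_\chi \Bigl|\sum_{n\leq N}a_n\chi(n)\Bigr|^2 \ll (Q+N)\,|a|^2.
\]
Under the hypothesis $Q^2\geq N$ one has $Q+N\leq 2Q\sqrt{N}$, giving the bound we need. The main obstacle is establishing this sharper large sieve at all: the standard Montgomery--Vaughan inequality produces only $(Q^2+N)|a|^2 = O(Q^2|a|^2)$, which is strictly weaker than $O(Q\sqrt{N}|a|^2)$ whenever $Q>\sqrt{N}$. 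Heath-Brown's improvement relies on a delicate Poisson summation applied to incomplete Gauss sums of Kronecker symbols, followed by a careful extraction of the main term; restricting attention to characters of prime or $4\cdot$prime modulus is merely a convenient simplification of his argument and does not change the dependence on $Q$ and $N$.
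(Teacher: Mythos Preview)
Your proposal has two disconnected pieces. The opening --- expand the square, split into diagonal and off-diagonal, flip $T(n,m)$ by reciprocity into a character sum $\sum_{p\leq Q}\psi(p)$ --- is set up but never executed: you do not actually bound the off-diagonal contribution this way. Instead you pivot to quoting Heath-Brown's quadratic large sieve as a black box. That citation has a gap: Heath-Brown's theorem carries a factor $(QN)^\epsilon$, so it yields only $O_\epsilon\bigl((QN)^\epsilon(Q+N)\,|a|^2\bigr)$, not the clean $(Q+N)|a|^2$ you wrote down. Under $Q^2\geq N$ this becomes $O_\epsilon\bigl((QN)^\epsilon Q\sqrt{N}\,|a|^2\bigr)$, strictly weaker than the lemma as stated; and in the downstream application (Lemma~\ref{LegendreBoundLem} feeding into Proposition~\ref{characterBoundProp}, where the total saving is only a fixed power of $\log N$), a stray $N^\epsilon$ would be fatal.

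The paper's argument is entirely different and much more elementary: it uses only the \emph{standard} multiplicative large sieve bound $(Q^2+N')|c|^2$, applied not to $a$ but to the Dirichlet convolution $c=a*b$, where $b_m=1/M$ if $m\leq M^2$ is a perfect square and $b_m=0$ otherwise, with $M\asymp Q/\sqrt{N}$. Because $a$ is supported on squarefree integers and $b$ on squares, one has $|c|^2=|a|^2\,|b|^2=|a|^2/M$; and because each $\chi$ in the sum is quadratic of modulus $p$ or $4p$, $\chi$ equals $1$ on squares coprime to its modulus, so $\sum_n c_n\chi(n)=\bigl(\sum_n a_n\chi(n)\bigr)\bigl(\sum_n b_n\chi(n)\bigr)\gg \sum_n a_n\chi(n)$. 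The padded sequence $c$ has length $N'=NM^2\asymp Q^2$, whence the ordinary large sieve gives $(Q^2+N')|c|^2\ll Q^2|a|^2/M\ll Q\sqrt{N}\,|a|^2$. This ``pad with squares so that the sequence length matches $Q^2$'' trick --- exploiting precisely the hypotheses that $a$ is squarefree-supported and $\chi$ is quadratic --- is the whole content of the lemma, and is what your proposal is missing.
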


Note the similarity between this and Lemma 4 of \cite{kn:HB}.

\begin{proof}
Let $M$ be the largest positive integer so that $Q^2 \leq NM^2 \leq 4Q^2$.  Let $b:\{1,2,\ldots,M^2\}\rightarrow \C$ be the function $b_{n^2}=\frac{1}{M}$ and $b=0$ on non-squares.  Let $c=a*b$ be the multiplicative convolution of $a$ and $b$.  Note that since $a$ is supported on square-free numbers and $b$ supported on squares that $||c||^2 = ||a||^2||b||^2 = ||a||^2/M$.  Applying the multiplicative large sieve inequality (see \cite{kn:IK} Theorem 7.13) to $c$ we have that
\begin{equation}\label{LSEquation}
\sum_{q\leq Q}\frac{q}{\phi(q)}\sum_{\chi \ \textrm{mod} \ q}^{\ \ \ \ \ *} \left| \sum_n c_n\chi(n) \right|^2 \leq (Q^2+NM^2-1)||c||^2.
\end{equation}
The right hand side is easily seen to be $$O(Q^2)||a||^2/M = O(Q^2||a||^2/(\sqrt{Q^2/N})) = O(Q\sqrt{N}||a||^2).$$  For the left hand side, we may note that it only becomes smaller if we remove the $\frac{q}{\phi(q)}$ or ignore the characters that are not quadratic or do not have moduli either a prime or four times a prime.  For such characters $\chi$ note that
$$
\sum_n c_n\chi(n) = \left( \sum_n a_n\chi(n)\right)\left(\sum_n b_n\chi(n)\right) = \Omega( \sum_n a_n\chi(n)).
$$
Where the last equality above follows from the fact that $\chi$ is 1 on squares not dividing its modulus, and noting that since its modulus divides four times a prime, the latter case only happens at even numbers of multiples of $p$.  Hence the left hand side of Equation \ref{LSEquation} is at least a constant multiple of
$$
\sum_{\begin{subarray}{l} \chi \ \textrm{quadratic character} \\ \textrm{of modulus } p \textrm{ or } 4p, \ \leq Q\end{subarray}}\left| \sum_{n=1}^N a_n \chi(n) \right|^2.
$$
This completes our proof.
\end{proof}

\begin{lem}\label{LegendreBoundLem}
Let $A\leq X$ be positive numbers, and let $a,b:\Z\rightarrow \C$ be functions so that $|a(n)|,|b(n)|\leq 1$ for all $n$.  We have that
$$
\left|\sum_{\begin{subarray}{l}A\leq p_1,p_2\\ p_1p_2\leq X\end{subarray}} a(p_1)b(p_2) \left(\frac{p_1}{p_2}\right)\right| = O(X\log(X) A^{-1/8}).
$$
Where the above sum is over pairs of primes $p_i$ bigger than $A$ with $p_1p_2\leq X$, and where $\left(\frac{p_1}{p_2}\right)$ is the Legendre symbol.
\end{lem}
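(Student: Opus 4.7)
The plan is to bound the bilinear sum by a dyadic decomposition followed by Cauchy--Schwarz and the large sieve of Lemma \ref{largeSieveLem}. The key observation is that for a fixed odd prime $p_2$, the map $n \mapsto \left(\tfrac{n}{p_2}\right)$ is the quadratic character $\chi_{p_2}$ of prime modulus $p_2$, exactly the sort of character to which Lemma \ref{largeSieveLem} applies. First I would partition the sum into $O(\log^2 X)$ dyadic blocks $P_1 \leq p_1 < 2P_1$, $P_2 \leq p_2 < 2P_2$, with $A/2 \leq P_1, P_2$ powers of two and $P_1 P_2 \leq 2X$. Using quadratic reciprocity $\left(\tfrac{p_1}{p_2}\right) = \pm\left(\tfrac{p_2}{p_1}\right)$ with a sign depending only on $p_1, p_2 \bmod 4$, and splitting the sum according to residue classes mod 4 (absorbing the sign into redefined $a, b$ which still satisfy $|a|, |b| \leq 1$), I may assume throughout that $P_1 \leq P_2$ in each block.

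Within such a block, apply Cauchy--Schwarz to the outer sum over $p_2$:
\begin{align*}
\left|\sum_{p_2 \in [P_2,2P_2)} b(p_2) \sum_{p_1 \in [P_1,2P_1)} a(p_1) \chi_{p_2}(p_1)\right|^2
 \leq O(P_2) \sum_{p_2 \in [P_2,2P_2)} \left|\sum_{p_1} a(p_1) \chi_{p_2}(p_1)\right|^2.
\end{align*}
Since $P_1 \leq P_2$, the hypothesis $Q^2 \geq N$ of Lemma \ref{largeSieveLem} is satisfied with $Q = 2P_2$ and $N = 2P_1$; noting that $a$ is supported on primes (hence on squarefree integers) and $|a|^2 \leq P_1$, the inner sum of squares is $O(P_2 \sqrt{P_1} \cdot P_1) = O(P_2 P_1^{3/2})$. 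Taking square roots, each dyadic block contributes at most $O(P_2 P_1^{3/4})$.

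Finally I would sum geometrically over the blocks. For a fixed $P_1$, the sum of $P_2 P_1^{3/4}$ over dyadic $P_2 \in [P_1, X/P_1]$ is dominated by its last term and is $O((X/P_1) P_1^{3/4}) = O(X P_1^{-1/4})$. Summing over dyadic $P_1 \geq A/2$ yields a geometric series with ratio $2^{-1/4}$, giving a total of $O(X A^{-1/4})$, which is stronger than the claimed bound $O(X\log(X)A^{-1/8})$. The main obstacle is the asymmetry of the Legendre symbol in $p_1$ and $p_2$: reciprocity must be invoked carefully so that in every dyadic block one can arrange $P_1 \leq P_2$ and apply the large sieve with the larger parameter playing the role of the modulus; the residue class splitting modulo $4$ is the cleanest way to reconcile this with the sign ambiguity.
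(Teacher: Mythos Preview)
Your approach is essentially the paper's: Cauchy--Schwarz in the modulus variable followed by Lemma~\ref{largeSieveLem}. The paper splits according to whether $p_1\le\sqrt{X}$ or $p_2\le\sqrt{X}$ (and subtracts the overlap) rather than invoking reciprocity, and it partitions the short variable into intervals of multiplicative width $1+A^{-1/4}$ rather than dyadic ones; your use of reciprocity and the mod~$4$ split to force $P_1\le P_2$ is a tidy alternative to the paper's $\sqrt{X}$-split.

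There is, however, one genuine gap. In each block $[P_1,2P_1)\times[P_2,2P_2)$ your displayed double sum ranges over the full rectangle, silently dropping the constraint $p_1p_2\le X$. For interior blocks (those with $4P_1P_2\le X$) this is harmless, but for the $O(\log X)$ boundary blocks with $X/4<P_1P_2\le X$ the true inner range is $p_1\in[P_1,\min(2P_1,X/p_2))$, which depends on $p_2$. Lemma~\ref{largeSieveLem} requires a \emph{single} coefficient sequence $a$, independent of the character, so it does not apply directly when the cutoff on $p_1$ varies with the modulus $p_2$. This is precisely why the paper uses the finer partition: with intervals of relative length $1+A^{-1/4}$ one may first discard all pairs with $X(1+A^{-1/4})^{-1}\le p_1p_2\le X$, at total cost $O(XA^{-1/4})$, after which the remaining inner range is genuinely independent of $p_2$ and the large sieve applies cleanly. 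With a pure dyadic partition the analogous discarded strip would have width comparable to $X$, which gives nothing. Your argument can be repaired either by refining the boundary blocks in the same way as the paper, or by quoting a maximal form of the large sieve at the price of an extra logarithm; either fix still lands comfortably inside the stated $O(X\log(X)A^{-1/8})$, but as written the step is incomplete.
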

\begin{proof}
We first bound the sum of the terms for which $p_1\leq \sqrt{X}$.

We begin by partitioning $[A,\sqrt{X}]$ into $O(A^{1/4}\log(X))$ intervals of the form $[Y,Y(1+A^{-1/4}))$.  We break up our sum based on which of these intervals $p_1$ lies in.  Once such an interval is fixed, we throw away the terms for which $p_2 \geq X/(Y(1+A^{-1/4}))$.  We note that for such terms $p_1p_2 \geq X(1+A^{-1/4})^{-1}$.  Therefore the number of such terms in our original sum is at most $O(XA^{-1/4})$, and thus throwing these away introduces an error of at most $O(XA^{-1/4})$.

The sum of the remaining terms is at most
$$
\sum_{A\leq p_2\leq X/(Y(1+A^{-1/4}))}\left| \sum_{Y\leq p_1\leq Y(1+A^{-1/4})} a(p_1)\left(\frac{p_1}{p_2}\right)\right|.
$$
By Cauchy-Schwarz, this is at most
$$
\sqrt{X/Y}\left(\sum_{A\leq p_2\leq X/(Y(1+A^{-1/4}))}\left| \sum_{Y\leq p_1\leq Y(1+A^{-1/4})} a(p_1)\left(\frac{p_1}{p_2}\right)\right|^2\right)^{1/2}.
$$
In the evaluation of the above, we may restrict the support of $a$ to primes between $Y$ and $Y(1+A^{-1/4})$.  Therefore, by Lemma \ref{largeSieveLem}, the above is at most
$$
\sqrt{X/Y}\cdot O\left(\sqrt{(X/Y)Y^{1/2}(YA^{-1/4})}\right) = O\left(XY^{-1/4}A^{-1/8} \right) = O(XA^{-3/8}).
$$
Hence summing over the $O(A^{1/4}\log(X))$ such intervals, we get a total contribution of $O(X\log(X) A^{-1/8}).$

We get a similar bound on the sum of terms for which $p_2\leq \sqrt{X}$.  Finally we need to subtract off the sum of terms where both $p_1$ and $p_2$ are at most $\sqrt{X}$.  This is
$$\sum_{A\leq p_1 \leq \sqrt{X}}\sum_{A\leq p_2\leq \sqrt{X}} a(p_1)b(p_2)\left(\frac{p_1}{p_2}\right).$$
This is at most
$$
\sum_{A\leq p_2\leq \sqrt{X}}\left| \sum_{A\leq p_1\leq \sqrt{X}} a(p_1)\left(\frac{p_1}{p_2}\right)\right|.
$$
By Cauchy-Schwarz and Lemma \ref{largeSieveLem}, this is at most
$$
\sqrt{X^{1/2}}O\left(\sqrt{X^{1/2}X^{1/4}X^{1/2}} \right) = O(X^{7/8})=O(XA^{-1/8}).
$$

Hence all of our relevant factors are $O(X\log(X) A^{-1/8})$, thus proving our bound.
\end{proof}

As mentioned above, in proving Proposition \ref{characterBoundProp}, we are going to want to deal separately with the terms in which $D\prod_i p_i$ is divisible by a particular bad Siegel zero.  In particular, for $X\leq Y$, let $q(X,Y)$ be the modulus of the Dirichlet character with the worst (closest to 1) Siegel zero of any Dirichlet character with modulus between $X$ and $Y$.  In analogy with the $Q$ defined in the proof of Proposition \ref{simpleCharBoundProp}, for integers $n,N,D,k,X,Y$ and a set $S$ of primes, we define $Q(n,N,D,k,X,Y,S)$ to be the largest possible value of
\begin{equation}\label{QformEquation}
\left|\frac{1}{n!}
\sum_{S_{N,n,D}'}
\prod_i \chi_i(p_i)
\prod_{i<j} (-1)^{\epsilon(p_i)\epsilon(p_j)d_{i,j}}\prod_{i<j} \left(\frac{p_i}{p_j}\right)^{e_{i,j}}
\right|.
\end{equation}
Above $S_{N,n,D}'$ is the subset of $S_{N,n,D}$ so that none of the $p_i$ are in $S$ and so that $q(X,Y)$ does not divide $D\prod p_i$, and
where the $\chi_i$ are Dirichlet characters of modulus dividing $D$, $e_{i,j},d_{i,j}\in \{0,1\}$, and $k$ is at most the number of indices $i$ so that one of:
\begin{itemize}
\item $e_{i,j}=1$ for some $j$ or
\item $\chi_i$ has modulus not dividing 4 or
\item $\chi_i$ has modulus exactly 4 and $d_{i,j}=0$ for all $j$.
\end{itemize}

We wish to prove an inductive bound on $Q$.  In particular we show:
\begin{lem}\label{InductiveLem}
Let $n,N,D,k,X,Y$ be as above.  Let $\beta$ be a real number so that the worst Siegel zero of a Dirichlet series of modulus at most $D$ other than $q(X,Y)$ is at most $1-\beta^{-1}$.  Let $M,A,B,C$ be integers so that
$$
B > \max(e^{(C+2)\beta \log\log M},e^{K(C+2)^2(\log D)^2 (\log\log (DM))^2},n\log^{C+2}(M),A)
$$
for a sufficiently large constant $K$.
Then for $S$ a set of primes $\leq A$, we have that $Q(n,N,D,k,X,Y,S)$ is at most the maximum of
$$
N\left( O\left(\frac{\log\log B}{n}\right)^k + O(\log(N)\log^{-C-2}(M))\sum_{a=0}^{k-1}O\left(\frac{\log\log B}{n}\right)^a\right)
$$
and
\begin{align*}
O & (N\log^2(N) A^{-1/8}) + \frac{2}{n}\sum_{p<A} Q(n-1,N/p,Dp,k-1,X,Y,S\cup\{p\})\\
& + \frac{1}{n(n-1)}\sum_{p_1,p_2<A}Q(n-2,N/p_1p_2,Dp_1p_2,k-2,X,Y,S\cup\{p_1,p_2\}).
\end{align*}
\end{lem}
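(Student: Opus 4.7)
The plan is to extend Lemma \ref{simpleInductiveStepLem} so as to accommodate the Legendre-symbol factors $\left(\frac{p_i}{p_j}\right)^{e_{i,j}}$. If every $e_{i,j}=0$, then Equation \ref{QformEquation} is precisely of the form estimated by Proposition \ref{simpleCharBoundProp}; reusing Equation \ref{SimpleInductionEquation} with the $M$ and $B$ in the present hypotheses yields the first branch of the maximum. So we may assume that some $e_{i,j}=1$; this forces $k\geq 1$, and after relabelling we take $(i,j)=(n-1,n)$.

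To produce the second branch, split the summation into three regions according to the sizes of $p_{n-1}$ and $p_n$ relative to $A$: (I) both $\geq A$, (II) exactly one of them equals a fixed prime $p<A$, and (III) both are fixed primes $<A$. In region (I), fix an ordering of $p_1,\ldots,p_{n-2}$, let $P=p_1\cdots p_{n-2}$, and absorb every factor not involving the pair $(n-1,n)$ into bounded functions $a(p_{n-1}), b(p_n)$ of modulus $\leq 1$. The inner sum becomes
$$
\sum_{\substack{A\leq p_{n-1},p_n\\p_{n-1}p_n\leq N/P}} a(p_{n-1})\,b(p_n)\left(\frac{p_{n-1}}{p_n}\right),
$$
which by Lemma \ref{LegendreBoundLem} is $O((N/P)\log(N)A^{-1/8})$; distinctness conditions cost only $O(\sqrt{N/P})$ on the diagonal and are absorbed. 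Combining the $(n-2)!$ orderings of each admissible $P$ with $\sum_{P\leq N}1/P=O(\log N)$ delivers the claimed total $O(N\log^2(N)A^{-1/8})$.

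For regions (II) and (III), the key observation is that for a fixed prime $p$, quadratic reciprocity exhibits $\left(\frac{p}{q}\right)$ as a quadratic Dirichlet character of $q$ of conductor dividing $4p$, while $(-1)^{\epsilon(p)\epsilon(q)d}$ has conductor dividing $4$. Setting $p=p_{n-1}$ and folding these factors into $\chi_n$ yields a new character $\chi_n'$ of modulus dividing $Dp$ whose conductor is divisible by $p\geq 3$ (since $p\nmid D$), and hence does not divide $4$; consequently the index $n$ remains ``bad'' in the reduced sum, so the new bad-index count is at least $k-1$. The remaining sum over $(p_1,\ldots,p_{n-2},p_n)$ then fits the definition of $Q(n-1,N/p,Dp,k-1,X,Y,S\cup\{p\})$: the Siegel-zero exclusion $q(X,Y)\nmid (Dp)\prod_{i\neq n-1}p_i=D\prod_i p_i$ matches the original restriction. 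The prefactor $\frac{1}{n!}=\frac{1}{n}\cdot\frac{1}{(n-1)!}$ together with the two symmetric choices ($p=p_{n-1}$ or $p=p_n$) produces $\frac{2}{n}\sum_{p<A}Q(n-1,\ldots)$, and region (III) is handled identically with two primes absorbed, giving the final term.

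The main obstacle is the bookkeeping underlying the induction: we must verify that absorbing one small prime $p$ into $D$ decreases $k$ by exactly one, which rests entirely on the observation that the new character factor has conductor divisible by $p\geq 3$ and so automatically keeps the partnered index ``bad''. Without this, $k$ could in principle drop by two and the recursion would be too weak. We also need to check that the Siegel-zero exclusion survives (which it does, since $D\prod p_i$ is unchanged by the renaming $D\mapsto Dp$) and that $S\cup\{p\}$ remains a set of primes $\leq A$. A minor accounting issue is that regions (II)--(III) allow the unconstrained prime to fall in any range, producing a mild over-counting with region (I); this is harmless because $Q$-bounds need only be upper bounds.
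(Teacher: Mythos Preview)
Your argument is correct and matches the paper's proof: dichotomize on whether some $e_{i,j}=1$, invoke Equation~\ref{SimpleInductionEquation} if not, and otherwise run inclusion--exclusion on the sizes of $p_{n-1},p_n$ relative to $A$, with Lemma~\ref{LegendreBoundLem} controlling the ``both large'' case and the remaining cases fed back into $Q$ after absorbing the small prime(s) into $D$. Your explanation of why the partnered index stays bad (the absorbed Legendre symbol contributes a character of conductor divisible by $p\geq 3$) is actually more explicit than the paper's ``it is easy to check''.

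Two minor points you pass over that the paper handles explicitly in region~(I). First, if $d_{n-1,n}=1$ then the factor $(-1)^{\epsilon(p_{n-1})\epsilon(p_n)}$ involves \emph{both} variables and so is not absorbed into $a$ or $b$; the paper notes that quadratic reciprocity converts $\left(\frac{p_{n-1}}{p_n}\right)(-1)^{\epsilon(p_{n-1})\epsilon(p_n)}$ into $\left(\frac{p_n}{p_{n-1}}\right)$, so one may simply assume $d_{n-1,n}=0$ and apply Lemma~\ref{LegendreBoundLem} as written. Second, the exclusion $q(X,Y)\nmid D\prod_i p_i$ must be carried into the inner double sum over $p_{n-1},p_n$; for fixed $P=p_1\cdots p_{n-2}$ this constraint either forbids a single prime value for one of $p_{n-1},p_n$ (absorbable into $a$ or $b$) or removes at most two individual terms, and in either case costs only $O(\sqrt{N/P})$.
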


\begin{proof}
We consider a sum of the form given in Equation \ref{QformEquation}.  If all of the $e_{i,j}$ are 0, we have a form of the type handled in the proof of Proposition \ref{simpleCharBoundProp}, and our sum is bounded by the first of our two expressions by Equation \eqref{SimpleInductionEquation}.

Otherwise, some $e_{i,j}$ is 1.  Without loss of generality, this is $e_{n-1,n}$.  We can also assume that $d_{n-1,n}=0$ since adding or removing the appropriate term is equivalent to reversing the Legendre symbol.  We split our sum into parts based on which of $p_{n-1},p_n$ are at least $A$.  In particular we take the sum of terms with both at least $A$, plus the sum of terms where $p_{n-1}<A$ plus the sum of terms with $p_n<A$ minus the sum of terms with both less than $A$.

First, consider the case where $p_{n-1},p_{n}\geq A$.  Fixing the values of $p_1,\ldots,p_{n-2}$, and letting $P=\prod_{i=1}^{n-2} p_i$, we consider the remaining sum over $p_{n-1}$ and $p_n$.  We have
$$
\frac{\pm 1}{n!} \sum_{\begin{subarray}{l} A\leq p_{n-1},p_n \\ p_{n-1}\neq p_n \\ (p_i,DP)=1 \\ Q \not | DPp_{n-1}p_n \\ p_{n-1}p_n \leq N/P \end{subarray}} a(p_{n-1})b(p_n)\left(\frac{p_{n-1}}{p_n}\right).
$$
Where $a,b$ are some functions $\Z\rightarrow \C$ so that $|a(x)|,|b(x)|\leq 1$ for all $x$.  We note that the condition that $(p_i,DP)=1$ can be expressed by setting $a$ and $b$ equal to 0 for some appropriate set of primes.  We note that the condition that $q(X,Y)$ not divide $DPp_{n-1}p_n$ is only relevant if $DP$ is missing only one or two primes of $q(X,Y)$.  In the former case, it is equivalent to making one more value illegal for the $p_i$.  In the latter case it eliminates at most two terms.  The condition that the $p_i$ are distinct removes at most $\sqrt{N/P}$ terms from our sum.  Therefore, perhaps after setting $a$ and $b$ to $0$ on some set of primes, the above is
$$
\frac{\pm 1}{n!}\left(O(\sqrt{N/P})+ \sum_{\begin{subarray}{l} A\leq p_{n-1},p_n  \\ p_{n-1}p_n \leq N/P \end{subarray}} a(p_{n-1})b(p_n)\left(\frac{p_{n-1}}{p_n}\right)\right).
$$
By Lemma \ref{LegendreBoundLem}, this is at most
$$
\frac{1}{n!}O(N/P \log(N) A^{-1/8}).
$$
Now for each $P\leq N$, it can be written in at most $(n-2)!$ ways, hence the sum over all $p_{n-1},p_n\geq A$ is at most
$$
\sum_{P=1}^N O(N/P \log(N) A^{-1/8}) = O(N\log^2(N) A^{-1/8}).
$$

Next, we consider the case where $p_n<A$.  We deal with this case by setting $p_n$ to each possible value of size at most $A$ individually.  It is easy to check that after setting $p_n$ to such a value $p$, the sum over the remaining $p_i$ is $1/n$ times a sum of the form bounded by $Q(n-1,N,Dp,k-1,X,Y,S\cup\{p\})$.   Hence the sum over all terms with $p_n <A$ is at most
$$
\frac{1}{n}\sum_{p<A} Q(n-1,N/p,Dp,k-1,X,Y,S\cup\{p\}).
$$

The sum of the terms with $p_{n-1}<A$ has the same bound, and the sum of terms with both less than $A$ is similarly seen to be at most
$$
\frac{1}{n(n-1)}\sum_{p_1,p_2<A}Q(n-2,N/p_1p_2,Dp_1p_2,k-2,X,Y,S\cup\{p_1,p_2\}).
$$
\end{proof}

We now use Lemma \ref{InductiveLem} to prove an inductive bound on $Q$.
\begin{lem}\label{QBoundLem}
Let $n,N,D,k,X,Y,S,M,A,B,C,\beta$ be as above.  Assume furthermore that $Y\geq D A^n$,
$$
B > \max(e^{(C+2)\beta \log\log M},e^{K(C+2)^2(\log Y)^2 (\log\log (YM))^2},n\log^{C+2}(M),A),
$$
and that $S$ contains only elements of size at most $A$.  Let $L=n-k$, then $Q(n,N,D,k,X,Y,S)$ is at most
\begin{align*}
N\Bigg( & O\left(\frac{\log\log B}{L}\right)^k  \\
 & + O\left( \log^2(N) A^{-1/8} + \log(N)\log^{-C-2}(M)\right)\sum_{a=0}^{k-1}O\left(\frac{\log\log B}{L}\right)^a\Bigg).
\end{align*}
\end{lem}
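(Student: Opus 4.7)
The plan is to prove the bound by induction on $k$, using Lemma \ref{InductiveLem} as the main recursive engine. The critical observation is that the quantity $L = n - k$ is preserved under both recursive steps: $(n-1)-(k-1) = n-k = (n-2)-(k-2)$, so the same $L$ will appear in each call and we never have to worry about the denominator shrinking faster than the power.

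For the base case $k = 0$, the desired bound reduces to $O(N)$ (empty error sum, $(\log\log B/L)^0 = 1$). This follows directly from the first alternative of Lemma \ref{InductiveLem}: its sum $\sum_{a=0}^{-1}$ is empty and its leading factor $(\log\log B/n)^0 = 1$, giving $O(N)$.

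For the inductive step with $k \geq 1$, Lemma \ref{InductiveLem} bounds $Q(n,N,D,k,X,Y,S)$ by the maximum of two expressions. If the first expression dominates, then since $L \leq n$ we have $(\log\log B/n)^a \leq (\log\log B/L)^a$ termwise, so it is majorized by the desired bound. Otherwise, I apply the inductive hypothesis to $Q(n-1,N/p,Dp,k-1,X,Y,S\cup\{p\})$ and $Q(n-2,N/p_1p_2,Dp_1p_2,k-2,X,Y,S\cup\{p_1,p_2\})$. Before doing so one must check that the IH still applies with the enlarged modulus: since each prime thrown into the modulus has size at most $A$, after the at most $n$ nested calls the modulus is still at most $DA^n \leq Y$, so the hypothesis $B > e^{K(C+2)^2(\log Y)^2(\log\log(YM))^2}$ absorbs the change, and the bound $1 - \beta^{-1}$ on non-$q(X,Y)$ Siegel zeros of moduli up to $Y$ continues to apply.

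Then I use Mertens' estimate $\sum_{p < A} 1/p = O(\log\log A) \leq O(\log\log B)$ (and its square for the double sum). The single-prime contribution becomes
\[
\frac{2N \cdot O(\log\log B)}{n}\left[O\!\left(\tfrac{\log\log B}{L}\right)^{k-1} + \bigl(\text{error}\bigr)\sum_{a=0}^{k-2}O\!\left(\tfrac{\log\log B}{L}\right)^a\right],
\]
and since $1/n \leq 1/L$, the factor $O(\log\log B)/n$ shifts each exponent up by one, giving the leading term $O(\log\log B/L)^k$ and shifting the error sum to $\sum_{a=1}^{k-1}$. The double-prime contribution is handled identically, using $n(n-1) \geq L^2$ for $k \geq 2$ (which holds since $L \leq n-2$ in that range). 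Combining these with the standalone error $O(N\log^2(N) A^{-1/8})$ from Lemma \ref{InductiveLem} — which supplies exactly the missing $a = 0$ term in the error sum — yields precisely the target bound.

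The only nontrivial step is the bookkeeping in the previous paragraph: tracking how a $1/n$ upstairs combines with the inductive error sums to reproduce $1/L^k$ (plus lower-order terms with $\log^2(N) A^{-1/8}$ and $\log(N)\log^{-C-2}(M)$ as error factors). This is routine once one observes that $L$ is invariant under the recursion, so I do not expect any genuine obstacle beyond this accounting.
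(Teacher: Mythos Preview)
Your proposal is correct and follows essentially the same approach as the paper: induction on $k$ via Lemma \ref{InductiveLem}, using the invariance of $L=n-k$ under both recursive branches, Mertens' estimate to convert $\frac{1}{n}\sum_{p<A}\frac{1}{p}$ into an extra factor of $O(\log\log B/L)$, and then straightforward term-by-term comparison to close the induction. If anything, you are slightly more explicit than the paper in justifying why the inductive hypothesis still applies after enlarging the modulus (namely that $Dp_1\cdots \leq DA^n \leq Y$, so the $(\log Y)^2$ term in the lower bound for $B$ and the choice of $\beta$ continue to cover the new modulus); the paper simply asserts ``the parameters of $Q$ in the above also satisfy our hypothesis.''
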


Note that we will wish to apply this Lemma with $n$ about $\log\log N$, $D$ a constant, $A$ polylog $N$, $X$ polylog $N$, $M=N$, $Y=DA^n$, and $B$ its minimum possible value.

\begin{proof}
We proceed by induction on $k$. In particular, we show that for a sufficiently large constant $c$ that $Q(n,N,D,k,X,Y,S)$ is at most
\begin{align*}
c N\Bigg( & \left(\frac{c\log\log B}{L}\right)^k  \\
 & + \left( \log^2(N) A^{-1/8} + \log(N)\log^{-C-2}(M)\right)\sum_{a=0}^{k-1}\left(\frac{c \log\log B}{L}\right)^a\Bigg).
\end{align*}

 We bound $Q$ inductively by Lemma \ref{InductiveLem}. Our base case is when $Q$ is equal to
$$
N\left( O\left(\frac{\log\log B}{n}\right)^k + O(\log(N)\log^{-C-2}(M))\sum_{a=0}^{k-1}O\left(\frac{\log\log B}{n}\right)^a\right)
$$
(which must happen if $k=0$).  In this case, our desired bound holds assuming that $c$ is sufficiently large.

Otherwise, $Q(n,N,D,k,X,Y,S)$ is bounded by
\begin{align*}
O & (N\log^2(N) A^{-1/8}) + \frac{2}{n}\sum_{p<A} Q(n-1,N/p,Dp,k-1,X,Y,S\cup\{p\})\\
& + \frac{1}{n(n-1)}\sum_{p_1,p_2<A}Q(n-2,N/p_1p_2,Dp_1p_2,k-2,X,Y,S\cup\{p_1,p_2\}).
\end{align*}
Notice that the parameters of $Q$ in the above also satisfy our hypothesis, so we may bound them inductively.  Note also that for the above values of $Q$ that the value of $L$ is the same.  Letting $U=\frac{c\log\log B}{L}$ and $$E = c\left(\log^2(N) A^{-1/8} + \log(N)\log^{-C-2}(M)\right),$$ then for $c$ sufficiently large the above is easily seen to be at most
\begin{align*}
& N\left(E + \frac{U}{2}\left(U^{k-1} + E\sum_{a=0}^{k-2}U^a \right) + \frac{U^2}{2}\left(U^{k-2} + E\sum_{a=0}^{k-3}U^a \right) \right)\\
\leq & N\left( U^k + E\sum_{a=0}^{k-1} U^a \right).
\end{align*}
This completes our inductive step and finishes the proof.
\end{proof}

We are finally prepared to prove Proposition \ref{characterBoundProp}.
\begin{proof}
The basic idea will be to compare the sum in question to the quantity $Q(n,N,D,k,X,Y,\emptyset)$  for appropriate settings of the parameters.  We begin by fixing the constant $c$ in the Proposition statement.  We let $C$ be a constant large enough that $c^n > \log^{-C}(N)$ (recall that $n$ was $O(\log\log N)$).  We set $A$ to $\log^{8C+16}(N)$, $X$ to $\log^C(N)$ and $Y$ to $DA^n= \exp(O_D(C(\log\log N)^2))$.  We let $M=N$.

We note that $\beta$ comes from either the worst Siegel zero of modulus less that $X$, or the second worst Siegel zero of modulus less than $Y$.  By Theorem 5.28 of \cite{kn:IK}, $\beta$ is at most $O_\epsilon(X^\epsilon)$ in the former case, and at most $O(\log(Y))$ in the latter case.  Hence (changing $\epsilon$ by a factor of $C$), we have unconditionally, that $\beta = O_\epsilon(\log^\epsilon (N))$ for any $\epsilon>0$.  We next let
$$
B = \max(e^{(C+2)\beta \log\log M},e^{K(C+2)^2(\log Y)^2 (\log\log (YM))^2},n\log^{C+2}(M),A).
$$
Hence for sufficiently large $N$ (in terms of $\epsilon$ and $D$),
$$
\log\log B < \epsilon \log\log N.
$$

Finally we pick $k$ so that $n/2 \geq k \geq m/2$.  Thus $L=n-k > n/2 = \Omega(\log\log N)$.  Noting that we satisfy the hypothesis of Lemma \ref{InductiveLem}, we have that for $N$ sufficiently large relative to $\epsilon$ and $D$ that $Q(n,N,D,k,X,Y,\emptyset)$ is at most
$$
N\left( O(\epsilon)^{m/2} +O\left( \log^2(N) \log^{-C-2}(N) + \log(N)\log^{-C-1}(N)\right)\sum_{a=0}^{k}O(\epsilon)^a\right).
$$
If $\epsilon$ is small enough that the term $O(\epsilon)$ is at most $1/2$, this is at most
$$
N\left( O(\epsilon)^{m/2} + \log^{-C}(N)\right).
$$
If additionally the $O(\epsilon)$ term is less than $c^2$, this is
$$
O(Nc^m).
$$
Hence for $N$ sufficiently large relative to $c$ and $D$,
$$
Q(n,N,D,k,X,Y,\emptyset) = O(Nc^m).
$$
Therefore unequivocally,
$$
Q(n,N,D,k,X,Y,\emptyset) = O_{c,D}(Nc^m).
$$

Finally, we note that the difference between $Q(n,N,D,k,X,Y,\emptyset)$ and the term that we are trying to bound is exactly the sum over such terms where $p_1\cdots p_n$ is divisible by $\frac{q(X,Y)}{gcd(q(X,Y),D)}.$  Since $q(X,Y)\geq X$, there are only $O_D(N \log^{-C}(N))$ such products.  Since each product can be obtained in at most $n!$ ways, each contributing at most $\frac{1}{n!}$, this difference is at most $O_D(N \log^{-C}(N))=O(Nc^m)$.  Therefore the thing we wish to bound is $O_{c,D}(Nc^m).$
\end{proof}

\section{Average Sizes of Selmer Groups}\label{MomentSec}

Here we use the results from the previous section to prove the following Proposition:
\begin{prop}\label{expectedValueProposition}
Let $E$ be an elliptic curve satisfying the conditions of Theorem \ref{mainThmSel} (and in particular by Theorem \ref{SDThm}, for any $E$ with full 2-torsion defined over $\Q$ and no cyclic 4-isogeny defined over $\Q$).  Let $S$ be a finite set of places containing $2,\infty$ and all of the places where $E$ has bad reduction.  Let $x$ be either $-1$ or a power of 2.  Let $\omega(m)$ denote the number of prime factors of $m$.  Say that $(m,S)=1$ if $m$ is an integer not divisible by any of the finite places in $S$.  For positive integers $N$, let $\mathcal{S}_N$ denote the set of integers $b\leq N$ squarefree with $|\omega(b)-\log\log N|\leq (\log\log N)^{3/4},$ and $ (b,S)=1$.  Then
$$
\lim_{N\rightarrow\infty}\frac{\sum_{\mathcal{S}_N}x^{\dim(S_2(E_b))}}{|\mathcal{S}_N|} = \sum_n x^n\alpha_n.
$$
\end{prop}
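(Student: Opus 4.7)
The plan is to follow the strategy outlined in the introduction: expand moments of $|S_2(E_b)|$ via the symplectic identity $|U\cap W|=|V|^{-1/2}\sum_{u\in U,w\in W}(-1)^{u\cdot w}$ from \cite{kn:SD}, control the resulting character sums using Propositions~\ref{characterBoundProp} and~\ref{averageClassProp}, and identify the main term using Theorem~\ref{SDThm}. I first stratify $\mathcal{S}_N$ by $n=\omega(b)$; by Corollary~\ref{loglogDivsCor} every relevant $n$ satisfies $(\log\log N)/2<n<2\log\log N$ for $N$ large, so the hypotheses of both propositions hold uniformly. It suffices to prove, uniformly in such $n$,
\[
\frac{1}{|\mathcal{S}_N^{(n)}|}\sum_{b\in\mathcal{S}_N^{(n)}} x^{\dim S_2(E_b)} \;=\; \sum_d \pi_d(n)\,x^d + o(1),
\]
where $\mathcal{S}_N^{(n)}$ is the stratum of $b\in\mathcal{S}_N$ with $\omega(b)=n$; summing over $n$ and invoking Theorem~\ref{SDThm} together with the rapid decay of $\alpha_d$ (which follows from the stated formula) then gives the claim.

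For $x=2^k$, raise the Lagrangian-intersection formula to the $k$-th power to obtain
\[
x^{\dim S_2(E_b)} = |V|^{-k/2} \sum_{\substack{u_1,\ldots,u_k\in U\\ w_1,\ldots,w_k\in W}} (-1)^{\sum_j u_j\cdot w_j}.
\]
Once $n$ is fixed and $U,V,W$ are expressed in terms of the $p_i$ following \cite{kn:SD}, each summand unfolds into a product
\[
\prod_i \chi_i(p_i) \prod_{i<j} (-1)^{\epsilon(p_i)\epsilon(p_j) d_{i,j}} \prod_{i<j} \bigl(\tfrac{p_i}{p_j}\bigr)^{e_{i,j}}
\]
of exactly the shape bounded by Proposition~\ref{characterBoundProp}. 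The case $x=-1$ is cleaner: the parity of $\dim S_2(E_b)$ depends only on the local class of $b$ (as noted before Lemma~\ref{parityLem}), so the corresponding expansion contains no Legendre symbols at all.

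I then split the expansion into characters with $m=0$ (the "inactive" ones, whose value on $b$ depends only on the residues $p_i\bmod D$) and those with $m\ge 1$. Applying Proposition~\ref{characterBoundProp} with $c$ chosen sufficiently small relative to the combinatorial constants controlling the number of characters with each given $m$ makes the total contribution of the $m\ge 1$ terms negligible compared with $|\mathcal{S}_N^{(n)}|$. For the surviving $m=0$ characters, Proposition~\ref{averageClassProp} replaces the average over $\mathcal{S}_N^{(n)}$ by the uniform average over $G=((\mathbb{Z}/D)^{*}/((\mathbb{Z}/D)^{*})^{2})^{n}$, with error $O_D(N\log\log\log N/\log\log N)$. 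By the construction of the Swinnerton-Dyer parameterisation, averaging uniformly over $G$ together with uniformly over Legendre-symbol data (which the now-killed $m\ge 1$ characters together encode) is exactly the average that defines $\pi_d(n)$; hence the surviving main term equals $\sum_d\pi_d(n)\,x^d$, and Lemma~\ref{parityLem} falls out as the $x=-1$ specialisation once one observes $\sum_d(-1)^d\alpha_d=0$ directly from the explicit formula for $\alpha_d$.

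The main obstacle I anticipate is the combinatorial and algebraic bookkeeping connecting the Lagrangian expansion to Proposition~\ref{characterBoundProp}: one must verify that every character arising from a choice of $(u_j,w_j)$ truly takes the permitted form, keep careful count of how many such characters have exactly $m$ active indices so that the geometric factor $c^m$ can absorb the combinatorial growth, and confirm that the surviving $m=0$ contribution really assembles into the Swinnerton-Dyer moment $\sum_d \pi_d(n)\,x^d$ rather than some weighted variant. Once this dictionary is pinned down, the analytic inputs from Propositions~\ref{characterBoundProp} and~\ref{averageClassProp} and the limiting distribution from Theorem~\ref{SDThm} close the argument cleanly.
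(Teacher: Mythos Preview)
Your plan is the paper's: stratify by $n$, expand $|S_2(E_b)|^k$ via the symplectic identity, kill the Legendre-symbol terms with Proposition~\ref{characterBoundProp}, and average the rest with Proposition~\ref{averageClassProp} to recover $\sum_d\pi_d(n)\,x^d$. One terminological slip: the characters ``whose value depends only on the residues $p_i\bmod D$'' are exactly those with all $e_{i,j}=0$, which is \emph{not} the same as $m=0$ in Proposition~\ref{characterBoundProp}; the correct split (and the paper's) is by whether any $e_{i,j}$ is nonzero, and one then invokes Proposition~\ref{characterBoundProp} via $m\ge\ell$, where $\ell$ is the number of indices involved in some nonzero $e_{i,j}$.

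The obstacle you flag is real and is not resolved by ``choosing $c$ small enough'': the missing ingredient is a specific algebraic constraint. Write $t_i=(u_i,w_i)\in T:=U_0^k\times W_0^k$, where $U_0,W_0$ are the two-dimensional model spaces for $U_{p_i},W_{p_i}$; then $e_{i,j}=(u_i+u_j)\cdot(w_i+w_j)=t_i\cdot t_j+q(t_i)+q(t_j)$ for the natural symplectic form and quadratic form on $T$. The point is that the vanishing of all $e_{i,j}$ over a set $I$ forces $\{t_i:i\in I\}$ to lie in a single translate of a Lagrangian subspace of $T$, hence in a set of size $2^k$; and if even one index is active, strictly fewer than $2^k$ values occur among the inactive $t_i$. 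This yields $O_{E,k}\bigl(\binom{n}{\ell}(2^k-1)^{n-\ell}2^{2k\ell}\bigr)$ pairs $(u,w)$ with exactly $\ell$ active indices, and the ``$-1$'' is decisive: with it, taking $c=2^{-2k-1}$ and summing over $\ell\ge1$ gives $N\,2^{-kn}(2^k-\tfrac12)^n=N(1-2^{-k-1})^n=o(N)$; replacing $2^k-1$ by $2^k$ would give $N\bigl((1+2^kc)^n-1\bigr)$, which diverges for every fixed $c>0$. The same Lagrangian bound also caps the $\ell=0$ terms at $O_{E,k}(2^{kn})$, so that after the $2^{-Mk}$ normalisation the accumulated error from Proposition~\ref{averageClassProp} remains $o(N)$. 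Finally, passing from $\sum_d\pi_d(n)\,x^d$ to $\sum_d\alpha_d\,x^d$ requires a tail bound on $\pi_d(n)$ that is uniform in $n$; this comes not from decay of $\alpha_d$ but from the uniform boundedness of the $(k{+}1)$st moment, which the argument itself produces.
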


This says that the $k^{th}$ moment of $|S_2(E_b)|$ averaged over $b\leq N$ with $$|\omega(b)-\log\log N|\leq (\log\log N)^{3/4}$$ is what you would expect given Theorem \ref{SDThm}. Furthermore, Proposition \ref{expectedValueProposition} says that averaged over the same set of $b$'s, that the rank of the Selmer group is odd half of the time.  The latter part of the Proposition follows from Lemma \ref{parityLem}, which we prove now:

\begin{proof}[Proof of Lemma \ref{parityLem}]
First we replace $E$ by a twist so that $c_i-c_j$ are pairwise relatively prime integers.  It is now the case that $E$ has everywhere good or multiplicative reduction, and we are now concerned with $\dim(S_2(E_{db}))$ for some constant $d|D$.  By \cite{kn:MR} Theorem 2.7, and \cite{kn:K} Corollary 1 we have that $\dim(S_2(E_{bd})) \equiv \dim(S_2(E)) \pmod{2}$ if and only if $(-1)^{x}\chi_{bd}(-N)=1$ where $x=\omega(d)$, $N$ is the product of the primes not dividing $d$ at which $E$ has bad reduction, and $\chi_{bd}$ is the quadratic character corresponding to the extension $\Q(\sqrt{bd})$.  From this, the Lemma follows immediately.
\end{proof}

In order to prove the rest of Proposition \ref{expectedValueProposition}, we will need a concrete description of the Selmer groups of twists of $E$.  We follow the treatment given in \cite{kn:SD}.  Let $b=p_1\cdots p_n$ where $p_i$ are distinct primes relatively prime to $S$ (we leave which primes unspecified for now).  Let $B=S\cup \{p_1,\ldots,p_n\}$.  For $\nu\in B$ let $V_\nu$ be the subspace of $(u_1,u_2,u_3)\in (\Q_\nu^*/(\Q_\nu^*)^2)^3$ so that $u_1u_2u_3=1$.  Note that $V_\nu$ has a symplectic form given by $(u_1,u_2,u_3)\cdot (v_1,v_2,v_3) = \prod_{i=1}^3 (u_i,v_i)_\nu$, where $(u_i,v_i)_\nu$ is the Hilbert Symbol.  Let $V=\prod_{\nu\in B} V_\nu$ be a symplectic $\mathbb{F}_2$-vector space of dimension $2M$.

There are two important Lagrangian subspaces of $V$.  The first, which we call $U$, is the image in $V$ of $(\Z_B^*/(\Z_B^*)^2)^3_1$.  The other, which we call $W$, is given as the product of $W_\nu$ over $\nu\in B$, where $W_\nu$ consists of points of the form $(x-bc_1,x-bc_2,x-bc_3)$ for $(x,y)\in E_b$.  Note that we can write $W=W_S\times W_b$ where $W_S=\prod_{\nu\in S} W_\nu$ and $W_b=\prod_{\nu|b} W_\nu$.  The Selmer group is given by
$$
S_2(E_b) = U\cap W.
$$

As written $U,W$ and $V$ all depend on the primes dividing $b$.  Fortunately, as we will see, there are natural spaces $U',W'$ that depend very little on $b$ with convenient isomorphisms to $U$ and $W$.  It would also be possible to similarly parameterize $V$, but this will prove to be unnecessary as we intend to compute the size of the intersection of $U$ and $W$ solely in terms of the restriction of the symplectic pairing on $V$ to $U\times W$.

Let $U'$ be the $\mathbb{F}_2$-vector space generated by the symbols $\nu,\nu'$ for $\nu\in S$ and $p_i,p_i'$ for $1\leq i \leq n$.  There is an isomorphism $f:U'\rightarrow U$ given by $f(\infty)=(-1,-1,1),f(\infty')=(1,-1,-1),f(p)=(p,p,1),f(p')=(1,p,p)$.

Note also that $W_{p_i}$ is generated by $((c_1-c_2)(c_1-c_3),b(c_1-c_2),b(c_1-c_3))$ and $(b(c_3-c_1),b(c_3-c_2),(c_3-c_1)(c_3-c_2))$.  If we define $W'$ to be the $\mathbb{F}_2$-vector space generated by the symbols $p_i,p_i'$ for $1\leq i \leq n$, then there is an isomorphism $g:W'\rightarrow W_b$ given by $g(p_i) = ((c_1-c_2)(c_1-c_3),b(c_1-c_2),b(c_1-c_3))\in W_{p_i}$ and $g(p_i') = (b(c_3-c_1),b(c_3-c_2),(c_3-c_1)(c_3-c_2)) \in W_{p_i}$.

Let $G=\prod_{\nu\in S} \mathfrak{o}_\nu^*/(\mathfrak{o}_\nu^*)^2$ (here $\mathfrak{o}_\nu^*$ are the units in the ring of integers of $k_\nu$).  Note that $W_S$ is determined by the restriction of $b$ to $G$.  So for $c\in G$ let $W_{S,c}$ be $W_S$ for such $b$.  Let $W'_c = W_{S,c}\times W'$.  Then we have a natural map $g_c:W_c'\rightarrow V$ that is an isomorphism between $W_c'$ and $W$ if $b$ restricts to $c$.

We are now ready to prove Proposition \ref{expectedValueProposition}.
\begin{proof}
For $x=-1$ this Proposition just says that the parity is odd half of the time, which follows from Lemma \ref{parityLem}.  For $x=2^k$ this says something about the expected value of $|S_2(E_b)|^k$.  For $x=2^k$ we will show that for each $n\in (\log\log N - (\log\log N)^{3/4},\log\log N + (\log\log N)^{3/4})$ that
\begin{align*}
&\sum_{S_{N,n,D}}|S_2(E_b)|^k = |S_{N,n,D}|\left(\sum_m \alpha_m (2^k)^m+\delta(n,N)\right) + O_{E,k}\left(\frac{N(\log\log\log N)^2}{\log\log N}\right).
\end{align*}
Where $\delta(n,N)$ is some function so that $\lim_{N\rightarrow\infty}\delta(n,N)=0$.  Summing over $n$ and noting that there are $\Omega(N)$ values of $b\leq N$ square-free with $|\omega(b)-\log\log N|<(\log\log N)^{3/4}$, and $(b,S)=1$ gives us our desired result.

In order to do this we need to better understand $|S_2(E_b)| = |U\cap W|$.  For $v\in V$ we have since $U$ is Lagrangian of size $2^M$,
\begin{align*}
\frac{1}{2^M}\sum_{u\in U} (-1)^{u\cdot v} & = \begin{cases} 1 \ \textrm{if} \ v\in U^\perp \\ 0 \ \textrm{else} \end{cases}\\
& = \begin{cases} 1 \ \textrm{if} \ v\in U \\ 0 \ \textrm{else} \end{cases}.
\end{align*}
Hence
\begin{align*}
|S_2(E_b)| & =|U\cap W|\\
& = \#\{w\in W:w\in U\}\\
& = \sum_{w\in W} \frac{1}{2^M}\sum_{u\in U} (-1)^{u\cdot w}\\
& = \frac{1}{2^M}\sum_{u\in U, w\in W} (-1)^{u\cdot w}\\
& = \frac{1}{2^M}\sum_{u\in U',w\in W_b'} (-1)^{f(u)\cdot g_b(w)}\\
\end{align*}
If we extend $f$ and $g_c$ to $f^k:(U')^k\rightarrow U^k$, $g_c^k:(W_c')^k\rightarrow V^k$, and extend the inner product on $V$ to an inner product on $V^k$, we have that
$$
|S_2(E_b)|^k = \frac{1}{2^{kM}}\sum_{\begin{subarray}{l}u \in (U')^k, w\in (W_b')^k \end{subarray}}(-1)^{f^k(u)\cdot g_b^k(w)},
$$
and therefore that
\begin{equation}\label{SelmerSizeEquation}
|S_2(E_b)|^k = \frac{1}{2^{kM}|G|}\sum_{\begin{subarray}{l} c\in G,\chi \in \widehat{G} \\ u \in (U')^k, w\in (W_c')^k \end{subarray}}\chi(bc^{-1})(-1)^{f^k(u)\cdot g_c^k(w)}.
\end{equation}
Notice that once we fix values of $c,\chi,u,w$ in Equation (\ref{SelmerSizeEquation}), the summand (when treated as a function of $p_1,\ldots,p_n$) is of the same form as the ``characters'' studied in Section \ref{charBoundSec}.

We want to take the sum over $S_{N,n,D}$ of $|S_2(E_b)|^k$.  If we let $D$ be 8 times the product of the finite odd primes in $S$, we note that each such $b$ can be expressed exactly $n!$ ways as a product $b=p_1\cdots p_n$ with $p_i$ distinct, $(p_i,D)=1$.  Therefore this sum equals
\begin{align*}
\frac{1}{n!}
\sum_{S_{N,n,D}}\frac{1}{2^{kM}|G|}\sum_{\begin{subarray}{l} c\in G,\chi \in \widehat{G} \\ u \in (U')^k, w\in (W_c')^k \end{subarray}}\prod_i\chi(p_i)\bar\chi(c)(-1)^{f^k(u)\cdot g_c^k(w)}.
\end{align*}
Interchanging the order of summation gives us
\begin{align*}
\frac{1}{2^{kM}|G|}\sum_{S_{N,n,D}}\frac{\bar\chi(c)}{n!}\sum_{\begin{subarray}{l} p_1,\ldots,p_n \\ \textrm{distinct primes} \\ (D,p_i)=1 \\ \prod_i p_i \leq N \end{subarray}}\left(\prod_i\chi(p_i)\right)(-1)^{f^k(u)\cdot g_c^k(w)}.
\end{align*}
Now the inner sum is exactly of the form studied in Proposition \ref{characterBoundProp}.

We first wish to bound the contribution from terms where this inner sum has terms of the form $\left(\frac{p_i}{p_j}\right)$, or in the terminology of Proposition \ref{characterBoundProp} for which not all of the $e_{i,j}$ are 0.  In order to do this, we will need to determine how many of these terms there are and how large their values of $m$ are.  Notice that terms of the form $\left(\frac{p_i}{p_j}\right)$ show up here when we are evaluating the Hilbert symbols of the form $(p,b(c_a-c_b))_p,(p,b(c_a-c_b))_q,(q,b(c_a-c_b))_p,(q,b(c_a-c_b))_q$ and in no other places.

Let $U_i\subset U'$ be the subspace generated by $p_i=(p_i,p_i,1)$ and $p_i'=(1,p_i,p_i)$.  For $u\in U'$ let $u_i$ be its component in $U_i$ in the obvious way.  Let $W_i\subset W'$ be $W_{p_i}$.  For $w\in W_c'$ let $w_i$ be its component in $W_i$.  It is not hard to see that the power of $\left(\frac{p_i}{p_j}\right)$ appearing in $(-1)^{f^k(u)\cdot g_c^k(w)}$ depends only on the projections of $u$ and $w$ onto $U_i\times U_j$ and $W_i\times W_j$, respectively.  Our analysis of these exponents will be simplified considerably, by noting that the $U_i$ and $W_i$ have convenient isomorphisms to fixed spaces which we call $U_0$ and $W_0$.  In particular, let $U_0$ be the $\mathbb{F}_2$-vector space with formal generators $p$ and $p'$.  We have a natural isomorphism between $U_0$ and $U_i$ sending $p$ to $p_i$ and $p'$ to $p_i'$.  We will hence often think of $u_i$ as an element of $U_0$.  Similarly let $W_0$ be the $\mathbb{F}_2$-vector space with formal generators $((c_1-c_2)(c_1-c_3),b(c_1-c_2),b(c_1-c_3))$ and $(b(c_3-c_1),b(c_3-c_2),(c_3-c_1)(c_3-c_2))$.  We similarly have natural isomorphisms between $W_i$ and $W_0$ and will often consider $w_i$ as an element of $W_0$ instead of $W_i$.

Additionally, we have a bilinear form $U_0\times W_0\rightarrow \mathbb{F}_2$ defined by:
\begin{align*}
p\cdot & ((c_1-c_2)(c_1-c_3),b(c_1-c_2),b(c_1-c_3)) \\
& = p'\cdot (b(c_3-c_1),b(c_3-c_2),(c_3-c_1)(c_3-c_2))\\
& = 1\\
p'\cdot & ((c_1-c_2)(c_1-c_3),b(c_1-c_2),b(c_1-c_3))\\
&= p \cdot (b(c_3-c_1),b(c_3-c_2),(c_3-c_1)(c_3-c_2))\\
& = 0.
\end{align*}
Notice that if $u\in U'$ and $w\in W_c'$, then the exponent of $\left(\frac{p_i}{p_j}\right)$ that appears in $(-1)^{f(u)\cdot g_c(w)}$ is $(u_i+u_j)\cdot (w_i+w_j)$.  Similarly, if $u\in (U')^k, w\in (W_c')^k$, the exponent of $\left(\frac{p_i}{p_j}\right)$ that appears in $(-1)^{f^k(u)\cdot g_c^k(w)}$ is $(u_i+u_j)\cdot (w_i+w_j)$, where $u_*,w_*$ are thought of as elements of $U_0^k$ and $W_0^k$, and the inner product is extended to $U_0^k\times W_0^k$ as $(x_1,\ldots,x_k)\cdot (y_1,\ldots,y_k) = \sum_{i=1}^k x_i\cdot y_i$.

Let $T=U_0^k\times W_0^k.$  We define a symplectic form on $T$ by $\dotp{(u,w)}{ (u',w')} = u\cdot w' + u'\cdot w$.  Also define a quadratic form $q$ on $T$ by $q(u,w)=u\cdot w$. We claim that given some sequence of elements, $t_x=(u_x,w_x)\in T$ for $x\in I$, that $(u_x+u_y)\cdot (w_x+w_y)=0$ for all pairs $x,y\in I$ only if all of the $t_x$ lie in a translate of a Lagrangian subspace of $T$ under the symplectic form $\dotp{-}{-}$.  To show this, we note that for $t=(u,w),t'=(u',w')$ that $(u+u')\cdot (w+w') = \dotp{t}{ t'} + q(t)+q(t')$.  We need to show that for all $x,y,z \in I$ that $\dotp{(t_x+t_y)}{ (t_x+t_z)}=0$.  This is true because
\begin{align*}
&\dotp{(t_x  +t_y) }{ (t_x+t_z)} \\ & = \dotp{t_x}{ t_x} + \dotp{t_x}{ t_z} + \dotp{t_y}{ t_x} +\dotp{t_y}{ t_z}\\
& = \dotp{t_x}{ t_z} + \dotp{t_y}{ t_x} +\dotp{t_y}{ t_z}\\
& = \dotp{t_x}{ t_z} + \dotp{t_y}{ t_x} +\dotp{t_y}{ t_z}+ 2q(t_x)+2q(t_y)+2q(t_z)\\
& = (\dotp{t_y}{ t_x} + q(t_x)+q(t_y)) + (\dotp{t_x}{ t_z} + q(t_x)+q(t_z)) + (\dotp{t_y}{ t_z} + q(t_y)+q(t_z))\\
& = 0.
\end{align*}

Suppose that we have some $u=(u_1,\ldots,u_n)\in \prod_{i=1}^nU_i^k$ and $w=(w_1,\ldots,w_n)\in \prod_{i=1}^n W_i^k$, and suppose that we have a set of $\ell$ indices in $\{1,2,\ldots,n\}$, which we call \emph{active} indices,  so that $(-1)^{f^k(u)\cdot g^k(w)}$ has terms of the form $\left(\frac{p_i}{p_j}\right)$ only if $i,j$ are both active, and suppose furthermore that each active index shows up as either $i$ or $j$ in at least one such term.  Let $t_i=(u_i,w_i)\in T$ (where we have identified $u_i$ and $w_i$ as elements of $U_0^k$ and $W_0^k$, respectively).  We claim that $t_i$ takes fewer than $4^{k}$ different values on non-active indices, $i$.  We note that our notion of active indices is similar to the notion in \cite{kn:HB} of linked indices.

Since $\dotp{t_i}{ t_j} + q(t_i)+q(t_j)=0$ for any two non-active indices $t_i$ and $t_j$, all of these must lie in a translate of some Lagrangian subspace of $T$.  Therefore $t_i$ can take at most $4^{k}$ values on non-active indices.  Suppose for sake of contradiction that all of these values are actually assumed by some non-active index.  Then consider $t_j$ for $j$ an active index.  The $t_i$ for $i$ either non-active or equal to $j$ must similarly lie in a translate of a Lagrangian subspace.  Since such a space is already determined by the non-active indices and since all elements of this affine subspace are already occupied, $t_j$ must equal $t_i$ for some non-active $i$. But this means that every $t_j$ is assumed by some non-active index which implies that no terms of the form $\left(\frac{p_i}{p_j}\right)$ survive, yielding a contradiction.

Now consider the number of such $u,w$ so that there are $\ell\geq 1$ active indices.  Once we fix the values $t_i$ that are allowed to be taken by the non-active indices (which can only be done in finitely many ways), there are $\binom{n}{\ell}$ ways to choose the active indices, at most $2^{k}-1$ ways to pick $t_i$ for each non-active index, and at most $2^{2k}$ ways for each active index.  Hence the total number of such $u$,$w$ with exactly $\ell$ active indices is
$$
O\left(\binom{n}{\ell}\left(4^{k}-1\right)^{n-\ell}\left(4^{2k}\right)^\ell\right).
$$
By Proposition \ref{characterBoundProp}, the value of the inner sum for such a $(u,w)$ is at most $O_{E,k}\left(N (2^{-2k-1})^\ell\right)$.  Hence summing over all $\ell>0$ and recalling the $2^{-Mk}$ out front we get a contribution of at most
\begin{align*}
N4^{-nk}O_{E,k}\left(\sum_\ell \binom{n}{\ell}\left(4^{k}-1\right)^{n-\ell}\left( \frac{1}{2}\right)^\ell \right) & = N4^{-nk}O_{E,k}\left( (4^k-1/2)^n\right) \\ & = NO_{E,k}\left( (1-4^{-k-1})^n\right) \\ & = NO_{E,k}\left((\log N)^{-4^{-k-2}}\right).
\end{align*}

Therefore, we may safely ignore all of the terms in which a $\left(\frac{p_i}{p_j}\right)$ shows up.  This is our analogue of Lemma 6 in \cite{kn:HB}.

Notice that by the above analysis, that the number of remaining terms must be $O_{k,E}(2^{Mk})$.  Additionally, for these terms we may apply Proposition \ref{averageClassProp}. Therefore each term, up to an error of $O_E\left( \frac{(\log\log\log N)^2}{\log\log N}\right)$, equals $|S_{N,n,D}|$ times the average of its summand over all possible conjugacy classes of $p_1,\ldots,p_n$ modulo $4D$. Since there are $O_{k,E}(2^{Mk})$ such terms, and since there is an outer factor of $2^{-kM}$ we reach two conclusions. Firstly, the sum in question is bounded by $O_{k,E}(|S_{N,n,D}|)$. Secondly, $1/n!$ times the sum over $S_{N,n,D}$ of $|S_2(E_b)|^k$ is, to within an error of $O_{E,k}\left( \frac{(\log\log\log N)^2}{\log\log N}\right)$ equal to $|S_{N,n,D}|$ times the average over $b=p_1\cdots p_n$ over all possible values of $p_i$ modulo $4D$ and Legendre symbols $\left(\frac{p_i}{p_j}\right)$ of $|S_2(E_b)|^k$. By definition, this latter average is simply
$$
\sum_d \pi_d(n)2^{kd}.
$$
Using the fact that this is bounded for $k+1$ independently of $n$, we find that $\pi_d(n) = O_{k,E}(2^{-(k+1)d})$.  In order to complete the proof of our Proposition, we need to show that
$$
\lim_{n\rightarrow\infty} \sum_d(\pi_d(n)-\alpha_d)2^{kd} = 0.
$$
But this follows from the fact that
$$
\sum_{d>X} (\pi_d(n)-\alpha_d)2^{kd} = O_{E,k}\left(\sum_{d>X} 2^{-d}\right) = O_{E,k}(2^{-X})
$$
and that $\pi_d(n)\rightarrow \alpha_d$ for all $d$ by assumption.
\end{proof}

\section{From Sizes to Ranks}\label{toRankSec}

In this Section, we turn Proposition \ref{expectedValueProposition} into a proof of Theorem \ref{mainThmSel}.  This Section is analogous to Section 8 of \cite{kn:HB}, although our techniques are significantly different. We begin by doing some computations with the $\alpha_i$.

Note that
$$
\alpha_{n+2} = \left(\frac{1}{\prod_{j=0}^\infty (1+2^{-j})} \right)2^{-\binom{n}{2}}\prod_{j=1}^n (1-2^{-j})^{-1}.
$$
Now $\prod_{j=1}^n (1-2^{-j})^{-1}$ is the sum over partitions, $P$, into parts of size at most $n$ of $2^{-|P|}$.  Equivalently, taking the transpose, it is the sum over partitions $P$ with at most $n$ parts of $2^{-|P|}$.  Multiplying by $2^{-\binom{n}{2}}$, we get the sum over partitions $P$ with $n$ distinct parts (possibly a part of size 0) of $2^{-|P|}$.  Therefore, we have that
$$
F(x) = \sum_{n=0}^\infty \alpha_n x^n = \frac{x^2\prod_{j=0}^\infty (1+2^{-j}x)}{\prod_{j=0}^\infty (1+2^{-j})}.
$$
Since the $x^{d+2}$ coefficient of $F(x)$ is also the sum over partitions, $P$ into exactly $d$ distinct parts (perhaps one of which is 0) of $2{-|P|}$ divided by $\prod_{j=0}^\infty (1+2^{-j})$.  This implies in particular that $\sum_{n=0}^\infty \alpha_n$ equals 1 as it should.

Let $T_{N}$ be the set of square-free $b\leq N$ with $(b,D)=1$, and $|\omega(b)-\log\log N|<(\log\log N)^{3/4}$. Let $C_d(N)$ be
$$
\frac{\#\{b\in T_N : \dim(S_2(E_b))=d\}}{|T_{N}|}.
$$
Let $C(N) = (C_0(N),C_1(N),\ldots)\in [0,1]^\omega$.  Theorem \ref{mainThmSel} is equivalent to showing that
$$
\lim_{N\rightarrow\infty} C(N) = (\alpha_0,\alpha_1,\ldots).
$$

\begin{lem}\label{limitLem}
Suppose that some subsequence of the $C(N)$ converges to some sequence $(\beta_0,\beta_1,\ldots)\in [0,1]^\omega$ in the product topology.  Let $G(x) = \sum_n \beta_n x^n$.  Then $G(x)$ has infinite radius of convergence and $F(x)=G(x)$ for $x=-1$ or $x$ equals a power of 2.  Also $\beta_0=\beta_1=0$.
\end{lem}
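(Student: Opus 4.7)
The plan is to turn the identities in Proposition \ref{expectedValueProposition} into information about the pointwise limit $(\beta_0,\beta_1,\ldots)$. Choosing $D$ to be $8$ times the product of the odd primes in $S$, the set $T_N$ coincides with the $\mathcal{S}_N$ of that Proposition, so the Proposition reads
\begin{equation*}
\sum_d C_d(N)\,x^d \longrightarrow F(x) \quad\text{as } N\to\infty
\end{equation*}
for $x=-1$ and for every $x=2^k$. The entire argument is then a legal interchange of this limit with the infinite sum in $d$, plus one trivial input from 2-descent.

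First I would derive a uniform tail bound on $C_d(N)$. Applying Proposition \ref{expectedValueProposition} at $x=2^k$ produces a constant $M_k$, depending only on $E$ and $k$, such that $\sum_d C_d(N)\,2^{kd}\le M_k$ for all sufficiently large $N$. Since every $C_d(N)$ is non-negative, this forces the per-term inequality $C_d(N)\le M_k\,2^{-kd}$. Passing to the limit along the subsequence yields $\beta_d\le M_k\,2^{-kd}$ for every $k\ge 0$, which makes $\limsup_d |\beta_d|^{1/d}\le 2^{-k}$ for each $k$, hence $=0$. Therefore $G(x)$ has infinite radius of convergence.

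Next, to prove $G(x)=F(x)$ at each admissible $x$, I would apply dominated convergence with counting measure on $\N$. For $x=2^k$, invoking the tail bound with parameter $k+1$ gives $|C_d(N)\,x^d|\le M_{k+1}\,2^{-d}$, a summable majorant independent of $N$ once $N$ is large; the case $x=-1$ is identical using $M_1$. Since $C_d(N_j)\to\beta_d$ coordinatewise along the subsequence, dominated convergence gives
\begin{equation*}
\sum_d C_d(N_j)\,x^d \longrightarrow \sum_d \beta_d\,x^d = G(x),
\end{equation*}
and the left side also tends to $F(x)$ by Proposition \ref{expectedValueProposition}; hence $G(x)=F(x)$.

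Finally, $\beta_0=\beta_1=0$ is immediate from the standing hypothesis on $E$. The condition that no $(c_i-c_j)(c_i-c_k)$ is a square is preserved by the twist, since these quantities scale by $b^2$; so each $E_b$ has full rational $2$-torsion but no cyclic subgroup of order $4$ over $\Q$, i.e.\ no non-zero element of $E_b[2](\Q)$ equals $2P$ for $P\in E_b(\Q)$. Consequently $E_b[2](\Q)\cong(\Z/2)^2$ injects into $E_b(\Q)/2E_b(\Q)\hookrightarrow S_2(E_b)$, forcing $\dim S_2(E_b)\ge 2$ for every $b$. Hence $C_0(N)=C_1(N)\equiv 0$, and passing to the limit gives $\beta_0=\beta_1=0$. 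The only real subtlety is the uniform tail control, and that it succeeds is precisely the virtue of having the moment identity available at every power of $2$, not just one.
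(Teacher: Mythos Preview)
Your proof is correct and follows essentially the same approach as the paper's: both extract a uniform bound $C_d(N)\le R_k 2^{-kd}$ from the convergence of the $2^k$-th moment, use it to get infinite radius of convergence, and then interchange limit and sum (you phrase this as dominated convergence, the paper writes out the same tail estimate by hand). Your justification of $\beta_0=\beta_1=0$ is slightly more elaborate than necessary---full rational $2$-torsion alone already forces $\dim(E_b(\Q)/2E_b(\Q))\ge 2$, without invoking the absence of a rational $4$-cycle---but it is certainly correct.
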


This Lemma says that if the $C(N)$ have some limit that the naive attempt to compute moments of the Selmer groups from this limit would succeed.

\begin{proof}
The last claim follows from the fact that since $E_b$ has full 2-torsion, its 2-Selmer group always has rank at least 2.
Notice that $\sum_d C_d(N)x^d$ is equal to the average size of $x^{\dim(S_2(E_b))}$ over $b\leq N$ square-free, relatively prime to $D$ with $|\omega(b)-\log\log N|<(\log\log N)^{3/4}$.  This has limit $F(x)$ as $N\rightarrow \infty$ by Proposition \ref{expectedValueProposition} if $x$ is $-1$ or a power of 2.  In particular it is bounded.  Therefore there exists an $R_k$ so that
$$
\sum_d C_d(N)2^{kd} \leq R_k
$$
for all $N$.  Therefore $C_d(N) \leq R_k 2^{-kd}$ for all $d,N$.  Therefore $\beta_d \leq R_k 2^{-kd}$.  Therefore $G$ has infinite radius of convergence.

Furthermore if we pick a subsequence, $N_i\rightarrow \infty$ so that $C_d(N_i)\rightarrow \beta_d$ for all $d$, we have that
\begin{align*}
F(2^k) & = \lim_{i\rightarrow \infty} \sum_d C_d(N_i) 2^{dk} \\
& = \lim_{i\rightarrow \infty} \sum_{d\leq X} C_d(N_i) 2^{dk} + O\left(\sum_{d>X} R_{k+1} 2^{-d}\right)\\
& = \lim_{i\rightarrow \infty} \sum_{d\leq X} C_d(N_i) 2^{dk} + O(R_{k+1}2^{-X}) \\
& = \sum_{d\leq X} \beta_d 2^{dk} + O(R_{k+1}2^{-X}).
\end{align*}
So
$$
\lim_{X\rightarrow\infty} \sum_{d\leq X} \beta_d 2^{dk} = F(2^k).
$$
Thus $G(2^k)=F(2^k)$.  For $x=-1$ the argument is similar but comes from the equidistribution of parity rather than expectation of size.
\end{proof}

\begin{lem}\label{taylorLem}
Suppose that $G(x)=\sum_n \beta_n x^n$ is a Taylor series with infinite radius of convergence.  Suppose also that $\beta_n\in [0,1]$ for all $n$ and that $G(x)=F(x)$ for $x$ equal to $-1$ or a power of 2.  Suppose also that $\beta_0=\beta_1=0$.  Then $\beta_n=\alpha_n$ for all $n$.
\end{lem}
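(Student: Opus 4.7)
My strategy will be to define $H(z) := G(z) - F(z)$, observe that $H$ is entire with zeros at $z = -1$ and at every $z = 2^k$ for $k \ge 0$, and show via Jensen's formula that the prescribed vanishing is incompatible with $H \not\equiv 0$. The content of the proof lies in matching a growth estimate on $H$ against the density of these zeros.

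First, I will obtain tight decay of the Taylor coefficients. Since $\beta_n \ge 0$ and $G(2^k) = F(2^k)$, each individual term gives $\beta_n \le F(2^k)\, 2^{-kn}$ for every $k,n \ge 0$. A direct calculation from the product formula for $F$ (computing $\prod_{j\ge 0}(1+2^{k-j})$ by separating $j\le k$ from $j>k$) yields
$$\log_2 F(2^k) = \tfrac{k^2}{2} + \tfrac{5k}{2} + O(1),$$
so minimizing over $k$ produces $\beta_n = O(2^{-n^2/2 + 5n/2})$; the same asymptotic holds for $\alpha_n$ by direct inspection of the formula $\alpha_{n+2} = 2^n/[\prod_{j=1}^n (2^j-1)\prod_{j}(1+2^{-j})]$.

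Next, I will upgrade this to a growth bound on $M_H(R) := \max_{|z|=R}|H(z)|$. Inserting the coefficient estimates into the Taylor series and evaluating by a saddle-point computation (the dominant index is $n \approx \log_2 R + 5/2$), both $M_G(R)$ and $M_F(R)$, and hence $M_H(R)$, obey
$$\log M_H(R) \le \frac{(\log R)^2}{2\log 2} + \tfrac{5}{2}\log R + O(\log\log R).$$

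Finally, suppose for contradiction that $H \not\equiv 0$. Because $\beta_0=\beta_1=\alpha_0=\alpha_1=0$, the zero of $H$ at the origin has some order $m \ge 2$, and $h(z) := H(z)/z^m$ is entire with $h(0)\neq 0$ and $M_h(R) \le M_H(R)/R^m$. Applying Jensen's formula to $h$ on the circle $|z|=R=2^{K+1/2}$, the zeros of $H$ at $-1$ and at $2^0,2^1,\dots,2^K$ contribute to the Jensen sum at least
$$\log R + \sum_{k=0}^{K}\log(R/2^k) \;=\; \Big[\tfrac{K^2}{2} + 2K + 1\Big]\log 2,$$
while the growth bound on $M_h(R)$ supplies the upper bound $[K^2/2 + (3-m)K]\log 2 + O(\log K)$. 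The leading $K^2/2$ terms cancel, and the comparison reduces to $(m-1)K\log 2 \le O(\log K)$, which is false for $K$ large whenever $m \ge 2$. Hence $H \equiv 0$, that is, $\beta_n = \alpha_n$ for all $n$.

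The delicate point, and what I expect to be the hardest part to pin down, is that the ambient growth of $H$ \emph{exactly} matches the Jensen lower bound coming from the zeros at $\{2^k\}$: both are $(\log R)^2/(2\log 2) + O(\log R)$, so a naive comparison is a tie. The argument only closes because the additional vanishing at $0$ of order $m\ge 2$ (supplied by the hypothesis $\beta_0=\beta_1=0$) shifts the upper bound by $-m\log R$, turning an otherwise tied estimate into a strict contradiction. Tracking the subleading $O(\log R)$ constants carefully enough that this tilt actually yields a contradiction is the technical crux.
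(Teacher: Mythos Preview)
Your proof is correct and follows essentially the same route as the paper's: bound the coefficients via $\beta_n \le F(2^k)2^{-kn}$ and optimize in $k$, convert this to a growth bound $\log|H(z)| \le \frac{(\log_2|z|)^2 + 5\log_2|z|}{2}\log 2 + O(1)$, and then pit Jensen's formula against the zeros at $-1,1,2,4,\ldots$ together with the order-$\ge 2$ zero at the origin to force $H\equiv 0$. The only cosmetic difference is that the paper first notes $H$ has order $<1$ and writes the Hadamard product $Cx^{2+t}\prod_\rho(1-x/\rho)$ before invoking Jensen, whereas you divide out $z^m$ and apply Jensen to $h=H/z^m$ directly; these are equivalent, and your version is arguably a touch cleaner since the factorization is not actually needed.
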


\begin{proof}
First we wish to prove a bound on the size of the coefficients of $G$.  Note that
$$
F(2^k) = \frac{2^{2k}(1+2^k)(1+2^{k-1})\cdots}{(1+2^0)(1+2^{-1})\cdots} = 2^{2k} \prod_{j=1}^k (1+2^k) = O\left(2^{2k+k(k+1)/2} \right).
$$
Now
$$
2^{nk}\beta_n \leq G(2^k) = F(2^k) = O\left(2^{2k+k(k+1)/2} \right).
$$
Therefore
$$
\beta_n = O\left(2^{2k+k(k+1)/2-kn} \right).
$$
Setting $k=n$ we find that
$$
\beta_n = O\left(2^{-n^2/2+5n/2} \right) = O\left( 2^{-\binom{n-2}{2}}\right).
$$
The same can be said for $F$.  Now consider $F-G$.  This is an entire function whose $x^n$ coefficient is bounded by $O\left( 2^{-\binom{n-2}{2}}\right).$  Furthermore $F-G$ vanishes to order at least 2 at 0, and order at least 1 at -1 and at powers of 2.  The bounds on coefficients imply that
$$
|F(x)-G(x)| \leq O\left( \sum_n 2^{-\binom{n-2}{2}} |x|^n \right).
$$
The terms in the above sum clearly decay rapidly for $n$ on either side of $\log_2(|x|)$.  Hence
\begin{align*}
|F(x)-G(x)| & = O\left( 2^{(-\log_2(|x|)^2+5\log_2(|x|))/2+\log_2(|x|)^2}\right) \\
& = O\left( 2^{(\log_2(|x|)^2+5\log_2(|x|))/2}\right).
\end{align*}
In particular $F-G$ is a function of order less than 1.  Hence it must equal
$$
C x^{2+t} \prod_\rho (1-x/\rho),
$$
where the product is over non-zero roots $\rho$ of $F-G$, and $t$ is some non-negative integer.  On the other hand, Jensen's Theorem tells us that if $C\neq 0$ the average value of $\log_2(|F-G|)$ on a circle of radius $R$ is
$$
\log_2 |C| + (2+t)\log_2 R + \sum_{|\rho|<R} \log_2(R/|\rho|).
$$
Setting $R=2^k$ and noting the contributions from $\rho=-1$ and $\rho = 2^j$ for $j<k$ we have
$$
O(1)+3k+\sum_{j<k}(k-j) = O(1) +3k + \binom{k+1}{2} = O(1) + \frac{k^2+7k}{2} > \frac{k^2+5k}{2}
$$
which is larger than $\log_2(|F-G|)$ can be at this radius.  This provides a contradiction.
\end{proof}

We now prove Theorem \ref{mainThmSel}.
\begin{proof}
Suppose that $C(N)$ does not have limit $(\alpha_0,\alpha_1,\ldots)$.  Then there is some subsequence $N_i$ so that $C(N_i)$ avoid some neighborhood of $(\alpha_0,\alpha_1,\ldots)$.  By compactness, $C(N_i)$ must have some subsequence with a limit $(\beta_0,\beta_1,\ldots)$.  By Lemmas \ref{limitLem} and \ref{taylorLem}, $(\alpha_0,\alpha_1,\ldots) = (\beta_0,\beta_1,\ldots)$.  This is a contradiction.

Therefore $\lim_{N\rightarrow\infty} C(N) = (\alpha_0,\alpha_1,\ldots)$.  Hence $\lim_{N\rightarrow\infty}C_d(N) = \alpha_d$ for all $d$.  The Theorem follows immediately from this and the fact the fraction of $b\leq N$ square-free with $(b,D)=1$ that have $|\omega(b)-\log\log N|<(\log\log N)^{3/4}$ approaches 1 as $N\rightarrow \infty$.
\end{proof}

It should be noted that our bounds on the rate of convergence in Theorem \ref{mainThmSel} are non-effective in two places.  One is our treatment in this last Section.  We assume that we do not have an appropriate limit and proceed to find a contradiction.  This is not a serious obstacle and if techniques similar to those of \cite{kn:HB} were used instead, it could be overcome.  The more serious problem comes in our proof of Proposition \ref{characterBoundProp}, where we make use of non-effective bounds on the size of Siegel zeroes. In particular, the rate of convergence depends on the function $Z(\epsilon)$, which is the largest modulus $q$ of a Dirichlet character with a Siegel zero larger than $1-q^\epsilon$ (or 1 if no such $q$ exists). It should then be the case that if for a sufficiently large constant $K$ and integer $m>d$ we have that $N>\exp(Z(K^{-m})^K)$ and $N>e^{e^{e^{Kd}}}$, then
\begin{align*}
\left|\frac{\#\{b\leq N: \dim(S_2(E_b))=d\}}{N} - \alpha_d\right| \leq O_E\left(2^{-\binom{d}{2}}\log\log(N)^{-1/8}+2^{-\binom{d}{2}-m^2} \right).
\end{align*}


\begin{thebibliography}{[99]}

\bibitem{spin} J. B. Friedlander, H. Iwaniec, B. Mazur, and K. Rubin \emph{ The spin of prime ideals}, to appear in Inventiones Mathematicae, manuscript available at \url{http://arxiv.org/abs/1110.6331}.

\bibitem{RamPrimes} G. H. Hardy and S. Ramanujan , \emph{The normal number of prime factors of a number}, Quarterly Journal of Mathematics, 48, pp. 76-92, 1917.

\bibitem{kn:HB} Heath-Brown \emph{The size of Selmer groups for the congruent number problem, II}, Inventiones Mathematicae, 118, pp. 331-370, 1994.

\bibitem{nottor} Z. Klagsbrun, B. Mazur, and K. Rubin \emph{Selmer ranks of quadratic twists of elliptic curves}, manuscript available at \url{http://arxiv.org/abs/1111.2321}.

\bibitem{kn:IK} H. Iwaniec and E. Kowalski \emph{Analytic Number Theory}, American Mathematical Society, 2004.

\bibitem{kn:K} K. Kramer, \emph{Arithmetic of elliptic curves upon quadratic extension}, Transactions Amer.
Math. Soc., 264, pp. 121-135, 1981.

\bibitem{kn:MR} B. Mazur and K. Rubin \emph{Ranks of Twists of Elliptic Curves and Hilbert's $10^{th}$ problem},Inventiones mathematicae, 181, pp. 541-575, 2010.

\bibitem{kn:SD} P. Swinnerton-Dyer \emph{The effect of twisting on the 2-Selmer group}, Mathematical Proceedings of the Cambridge Philosophical Society, 145 part 3, pp. 513-526.

\bibitem{Yu} G. Yu, \emph{On the Quadratic Twists of a Family of Elliptic Curves}, Mathematika, 52 No 1-2, pp. 139-154, 2005.

\end{thebibliography}
\end{document}